\documentclass[10pt]{article}
\usepackage[utf8]{inputenc}
\usepackage[stable]{footmisc}
\usepackage{amsmath}  
\allowdisplaybreaks
\usepackage{graphicx} 
\usepackage{amsthm}
\usepackage{tikz-cd} 
\usepackage{amssymb}
\usepackage{bm}
\usepackage{bbm}
\usepackage{booktabs}
\usepackage{pifont}
\usepackage{xcolor}
\usepackage{array}
\usepackage{makecell}
\usepackage{dsfont}
\usepackage{graphicx}
\usepackage{subcaption}
\usepackage{blindtext}
\usepackage{url}
\usepackage{algorithm}
\usepackage{algorithmic}
\usepackage{thmtools}
\usepackage{thm-restate}
\usepackage{setspace}
\usepackage{enumitem}
\usepackage[margin=1in]{geometry}
\usepackage{xcolor}
\usepackage{cite} 

\usepackage{tcolorbox}
\usepackage{tabularx}
\usepackage{threeparttable}
\usepackage{authblk}
\usepackage{etoc}
\usepackage[export]{adjustbox}
\usepackage{mathabx}

\newtheorem{theorem}{Theorem}[section]
\newtheorem{proposition}[theorem]{Proposition}
\newtheorem{lemma}[theorem]{Lemma}
\newtheorem{corollary}[theorem]{Corollary}
\newtheorem{assumption}[theorem]{Assumption}

\theoremstyle{definition}
\newtheorem{definition}[theorem]{Definition}

\theoremstyle{definition}

\newtheorem{remark}[theorem]{Remark}

\numberwithin{equation}{section}

\newcommand{\R}{\mathbb{R}}
\newcommand{\N}{\mathbb{N}}

\newcommand{\BBS}{\mathbb{S}}

\newcommand{\CP}{\mathcal{P}}
\newcommand{\CN}{\mathcal{N}}
\newcommand{\CX}{\mathcal{X}}

\newcommand{\CA}{\mathcal{A}}
\newcommand{\CS}{\mathcal{S}}

\newcommand{\CT}{\mathcal{T}}

\newcommand{\CF}{\mathcal{F}}

\newcommand{\E}{\mathbb{E}}
\renewcommand{\P}{\mathsf{P}}

\newcommand{\Law}{\mathrm{Law}}

\newcommand{\sgn}[1]{\mathrm{sgn}\left(#1\right)}
\newcommand{\dummy}{\mathord{\color{black!33}\bullet}}

\newcommand{\tr}[1]{\mathrm{tr}(#1)}

\newcommand{\divSphere}{\operatorname{div}_{\mathbb S^{d-1}}}

\newcommand{\SLpm}{\mathrm{SL}^{\pm}}
\newcommand{\Wrho}{\widetilde{\rho}}
\newcommand{\ACG}{\mathrm{ACG}}
\newcommand{\SPD}{\mathrm{SPD}}
\newcommand{\op}{\mathrm{op}}
\newcommand{\Wvarrho}{\widetilde{\varrho}}
\newcommand{\Wx}{\widetilde{x}}
\newcommand{\WX}{\widetilde{X}}
\newcommand{\WY}{\widetilde{Y}}
\newcommand{\WK}{\widetilde{K}}
\newcommand{\Bx}{\mathbf{x}}

\newcommand{\BWx}{\widetilde{\mathbf{x}}}

\newcommand{\BB}{\mathbf{B}}
\newcommand{\WF}{\widetilde{F}}
\newcommand{\Pperp}{\P^{\perp}}
\newcommand{\DPsi}{\dot{\Psi}}
\newcommand{\PsiMu}{\Psi^{\mu}}
\newcommand{\PsiRho}{\Psi^{\rho}}
\newcommand{\Wmu}{\widetilde{\mu}}
\newcommand{\PsiWMu}{\Psi^{\Wmu}}
\newcommand{\PsiWRho}{\Psi^{\Wrho}}
\newcommand{\GL}{\mathrm{GL}}
\newcommand{\adj}{\mathrm{adj}}
\newcommand{\BarRho}{\bar{\rho}}
\newcommand{\PsiBRho}{\Psi^{\BarRho}}
\newcommand{\diag}{\mathrm{diag}}
\newcommand{\WCA}{\widetilde{\CA}}

\makeatletter
\def\url@leostyle{%
	\@ifundefined{selectfont}{\def\UrlFont{\sf}}{\def\UrlFont{\small\ttfamily}}}
\makeatother
\definecolor{darkgreen}{rgb}{0,0.4,0}

\usepackage[final,hypertexnames=false]{hyperref} 
\hypersetup{
	colorlinks=true,       
	linkcolor=blue,        
	citecolor=blue,        
	filecolor=magenta,     
	urlcolor=blue         
}
\usepackage{cleveref}

\newcommand{\Unif}{\mathrm{Unif}}

\title{{\usefont{OT1}{bch}{b}{n}
	\LARGE  An Analytical Framework for a Multi-Particle Oja Flow}}
\author{Sixu Li\thanks{Email: \texttt{sli370@jh.edu}}}
\affil{Department of Mathematics, Johns Hopkins University}
\date{\today}

\begin{document}

\maketitle

\begin{abstract}
We develop an analytical framework for studying a family of interacting particle systems that we refer to as the multi-particle Oja flow. 
The common interaction structure underlying this family arises in a variety of applications, including higher-order synchronization models, opinion dynamics, and self-attention dynamics derived from Transformer architectures, motivating a unified mathematical framework for their analysis.
We first establish well-posedness for both the finite-particle and the mean-field systems. 
We also prove convergence of the finite-particle dynamics to the mean-field equation as the number of particles tends to infinity.
We then investigate low-dimensional structures underlying the dynamics.
In particular, we show that the mean-field solution admits a representation through a time-dependent normalized linear transformation acting on the initial distribution, with the transformation governed by a matrix ODE.
Building on this representation, we identify a broad class of parametric families that are invariant under the mean-field dynamics.
When the initial distribution belongs to one of these families, the evolution remains within the same family for all time and can therefore be characterized through the dynamics of the corresponding family parameters.
As a concrete example, we develop this reduction within the angular central Gaussian family and illustrate how the resulting parameter dynamics provide reduced descriptions of the mean-field evolution for several representative systems.
\end{abstract}

\tableofcontents

\etocdepthtag.toc{main}
\etocsettagdepth{main}{subsection}
\etocsettagdepth{appendix}{none}

\section{Introduction}
Interacting particle systems have long played important roles as mathematical models and as analytical and computational tools across a broad range of problems arising in science \cite{watts1998collective,orsogna2006self,carrillo2010particle,ha2008particle,motsch2014heterophilious}, engineering \cite{pagonabarraga2001dissipative,orsogna2006self,albi2019vehicular,guo2021overviews}, and, more recently, machine learning \cite{liu2016stein,rotskoff2018parameters,kovachki2019ensemble,carrillo2021consensus,geshkovski2025mathematical,el2026physics}. 
However, the mathematical analysis of such systems can be challenging, as the particles depend on one another and evolve collectively through their interactions.
At the same time, many interacting particle systems exhibit symmetries and possess hidden structures that allow their dynamics to be described through a small number of governing quantities.
Uncovering such structures can substantially simplify the analysis of the associated dynamics.
A classical example is the Kuramoto model \cite{kuramoto1975international,kuramoto1984chemical}, whose dynamics admit reduced representations revealed by the Watanabe–-Strogatz transformation \cite{watanabe1994constants,marvel2009identical} and the Ott–Antonsen ansatz \cite{ott2008low,ott2009long}.
These representations have provided powerful tools for analyzing the model and its variants, while also opening new directions for further study \cite{pikovsky2015dynamics,kuramoto2026half}.

In this paper, we consider the following family of interacting particle systems on the unit sphere $\BBS^{d-1}$:
\begin{equation}\label{eq:MP_oja_flow}
    \Dot{x}_k(t) = \left(I_d - x_k(t) x_k(t)^{\top} \right) \CF[\rho_n(t)] x_k(t), \quad x_k(0) = x_{k,0},  \qquad \text{for } k = 1,2, \dots, n \, .
\end{equation}
Here, $\rho_n(t) := \frac{1}{n} \sum_{k=1}^n \delta_{x_k(t)}$ is the empirical measure associated with the particle positions, and the interaction kernel $\CF: \CP(\BBS^{d-1}) \to \R^{d\times d}$ is a matrix-valued functional defined on probability measures over $\BBS^{d-1}$.
For later use, we denote the associated velocity field by
\begin{equation}\label{eq:vf}
    b (x, \varrho) := \left( I_d - xx^{\top} \right) \CF[\varrho] x, \qquad x \in \BBS^{d-1}, \; \varrho \in \CP(\BBS^{d-1}) \, .
\end{equation}

The system \eqref{eq:MP_oja_flow} can be viewed as a multi-particle Oja-type model.
Indeed, when the interaction kernel $\CF$ is a fixed matrix, each particle evolves independently according to the classical Oja flow \cite{oja1982simplified}; see Section \ref{sec:examples} for a further illustration.
We therefore refer to this family of interacting particle systems as the \emph{multi-particle Oja flow}.

For suitable choices of the interaction kernel $\CF$, the multi-particle Oja flow \eqref{eq:MP_oja_flow} encompasses
a variety of systems arising in different applications.
These include higher-dimensional Kuramoto-type models \cite{dai2021dimensional,kim2021cluster,lohe2025exactA}, opinion dynamics \cite{zhang2021dissensus,zhang2022opinion}, and linear self-attention dynamics \cite{li2026diverse}; see Section \ref{subsec:related_work} for a more detailed discussion. 
Different choices of $\CF$ can give rise to qualitatively distinct collective behaviors. 
Figure \ref{fig:oja-overview} shows the evolution of \eqref{eq:MP_oja_flow} under two different interaction kernels.

\begin{figure*}[!htb]
    \centering

    \begin{subfigure}{\textwidth}
        \centering
        \includegraphics[width=\textwidth]{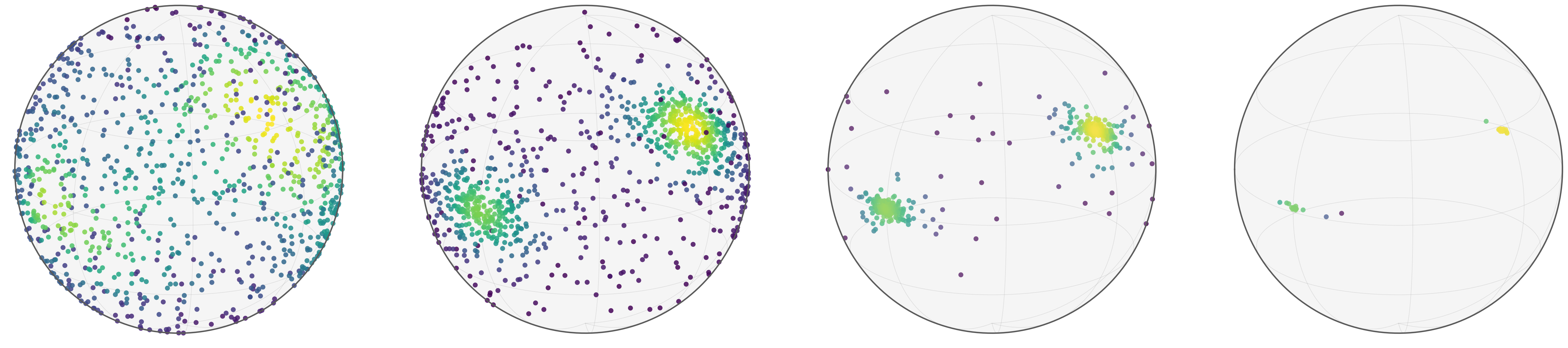}
        \caption{Second-moment interaction kernel \eqref{eq:CF_second_moment}.}
        \label{fig:oja-second-moment}
    \end{subfigure}

    \vspace{0.6em}

    \begin{subfigure}{\textwidth}
        \centering
        \includegraphics[width=\textwidth]{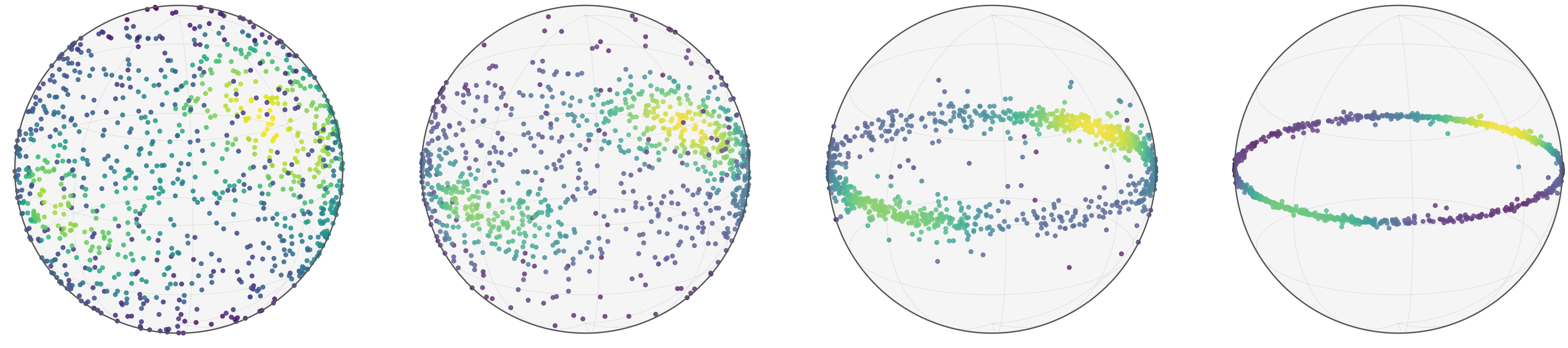}
        \caption{Cross-product interaction kernel \eqref{eq:CF_three_body}.}
        \label{fig:oja-three-body}
    \end{subfigure}

    \caption{
    Evolution of the multi-particle Oja flow \eqref{eq:MP_oja_flow} under two
    different interaction kernels. 
    Each row shows snapshots of the particle
    configuration on $\mathbb{S}^2$ at four successive times.
    The color of each particle represents the estimated local particle density, with purple and yellow corresponding to lower and higher densities, respectively.
    }
    \label{fig:oja-overview}
\end{figure*}

The recurrence of the same dynamical form across different settings and the variety of collective behaviors exhibited by these systems motivate a systematic study of the multi-particle Oja flow \eqref{eq:MP_oja_flow} as a general class of interacting particle systems.
To set the stage for the developments that follow, we first introduce the corresponding mean-field dynamics.

Suppose that the initial empirical measures $\rho_n(0) = \frac{1}{n} \sum_{k=1}^n \delta_{x_{k, 0}}$ converge, as $n \rightarrow \infty$, to a probability measure $\rho_0 \in \CP(\BBS^{d-1})$.
Formally, the mean-field limit of \eqref{eq:MP_oja_flow} is described by the continuity equation on the sphere $\BBS^{d-1}$:
\begin{equation}\label{eq:mf_oja_flow}
\partial_t\rho
+
\divSphere \left( b(\dummy, \rho)\rho \right)
=0, \qquad \rho(0) = \rho_0,
\end{equation}
where $\divSphere$ denotes the spherical divergence operator and the vector field $b$ is defined in \eqref{eq:vf}.
The corresponding characteristic dynamics can be interpreted as the evolution of a representative particle:
\begin{equation}\label{eq:representative_particle_ode}
    \dot{X}(t) = b(X(t), \rho(t)), \qquad X(0) = X_0,
\end{equation}
where $X_0 \sim \rho_0$ and $\rho(t) = \Law(X(t))$.

\subsection{Contributions}
In this paper, we develop an analytical framework for studying the multi-particle Oja flow.
As a first step, we establish the well-posedness of the dynamics at both the finite-particle and mean-field levels, together with a mean-field convergence result.
We then investigate hidden \emph{low-dimensional structures} underlying the dynamics.\footnote{Here, by low-dimensional structures we mean properties of the dynamics that allow the particle system or mean-field evolution to be described by time-dependent quantities whose dimension does not grow with the number of particles \cite{watanabe1994constants,ott2008low,ott2009long,marvel2009identical}.} 
Our contributions are summarized as follows.
\begin{itemize}
    \item[(i)] \textbf{Well-posedness and mean-field convergence.} 
    We establish the well-posedness of both the finite-particle system \eqref{eq:MP_oja_flow} and the mean-field equation \eqref{eq:mf_oja_flow}; see Theorem \ref{thm:well-posedness_finite_sys} and Theorem \ref{thm:well-posedness_mf}, respectively.
    In Theorem \ref{thm:well-posedness_mf}, we also prove Wasserstein stability of the mean-field solution with respect to the initial distribution. 
    Combined with the empirical-measure representation of the finite-particle system, this stability result gives the convergence of the associated empirical measures to the mean-field solution stated in Corollary \ref{cor:mf_app}.

    \item[(ii)] \textbf{Representation via normalized linear transformations.} 
    We show that the solution to the mean-field equation \eqref{eq:mf_oja_flow} admits an explicit representation as the pushforward of the initial distribution through a time-dependent normalized linear transformation as defined in \eqref{eq:NL_transform}.
    The evolution of this transformation is governed by a matrix ODE; see Theorem \ref{thm:PI_mf_sys}.

    \item[(iii)] \textbf{Reduction via invariant parametric families.}
    Building on this representation, we identify a broad class of parametric families that are invariant under the mean-field equation \eqref{eq:mf_oja_flow}.
    That is, if the initial distribution belongs to one of these families, then the solution remains within the same family for all time, reducing the mean-field evolution to the dynamics of the corresponding family parameters.
    A general approach for identifying such invariant parametric families is presented in Proposition \ref{prop:construct_inv}.
    
    As an important example, we show that the angular central Gaussian family is invariant under the mean-field equation \eqref{eq:mf_oja_flow}.
    Moreover, we derive the evolution equation for its covariance parameter matrix, providing an explicit description of the mean-field dynamics within this family; see Theorem \ref{thm:ACG_invariant}.
\end{itemize}

\subsection{Related Work}\label{subsec:related_work}
\paragraph{Related models and applications.}
Dynamics that can be recast in the form of the multi-particle Oja flow \eqref{eq:MP_oja_flow} have appeared in several settings.
In particular, several higher-dimensional generalizations of the Kuramoto model with higher-order interactions \cite{dai2021dimensional,kim2021cluster,lohe2025exactA} fall within this class.
Related dynamics also arise from the double-sphere model introduced in \cite{lohe2020on}, which was motivated by a reformulation of a non-Abelian Kuramoto model \cite{lohe2009non}.
Opinion dynamics models designed to reach dissensus states also provide instances of the multi-particle Oja flow \cite{zhang2021dissensus,zhang2022opinion}.
A further connection can be identified with linear Transformer architectures \cite{katharopoulos2020transformers,peng2021random,schlag2021linear}:
by abstracting and idealizing the linear self-attention mechanism as in \cite{li2026diverse}, the associated dynamics can likewise be cast within this family.
A related structure also arises in the Doi--Onsager equation with the Maier--Saupe potential \cite{liu2005axial}, whose noiseless dynamics belong to the same class.

Table \ref{tab:interaction-kernels} summarizes representative interaction kernels associated with the systems described above when written in the form of the multi-particle Oja flow; some systems involve scaled versions or combinations of the kernels listed there.

\begin{table*}[!htb]
    \centering
    \small
    \renewcommand{\arraystretch}{1.5}
    \caption{
    Representative interaction kernels obtained by rewriting systems arising in different applications in the form 
    \eqref{eq:MP_oja_flow}.
    The kernel names are adopted here for convenience.
    }
    \label{tab:interaction-kernels}

    \begin{tabularx}{0.9\textwidth}{@{
        }>{\raggedright\arraybackslash}p{0.28\textwidth}
        >{\raggedright\arraybackslash}X
        >{\centering\arraybackslash}p{0.15\textwidth}@{}}
        \toprule
        Name
        & Formula
        & References
        \\
        \midrule

        Mean outer-product kernel
        &
        $\displaystyle
        \CF[\varrho]
        =
        \left(
        \int_{\BBS^{d-1}} x\,d\varrho(x)
        \right)
        \left(
        \int_{\BBS^{d-1}} x\,d\varrho(x)
        \right)^{\top}
        $
        &
        \cite{dai2021dimensional}
        \\[1.0em]

        Second-moment kernel
        &
        $\displaystyle
        \CF[\varrho]
        =
        \int_{\BBS^{d-1}} xx^{\top}\,d\varrho(x)
        $
        &
        \cite{liu2005axial,lohe2020on,kim2021cluster,zhang2022opinion,lohe2025exactA}
        \\[1.0em]

        Cross-product kernel
        &
        $\displaystyle
        \CF[\varrho]
        =
        -\int_{\BBS^2}\int_{\BBS^2}
        (x\times y)(x\times y)^{\top}
        \,d\varrho(x)\,d\varrho(y)
        $
        &
        \cite{lohe2025exactA}
        \\[1.0em]

        Linear self-attention kernel
        &
        $\displaystyle
        \CF[\varrho]
        =
        V
        \left(
        \int_{\BBS^{d-1}} xx^{\top}\,d\varrho(x)
        \right)
        A, \quad A,V \in \R^{d\times d}
        $
        &
        \cite{li2026diverse}
        \\

        \bottomrule
    \end{tabularx}
\end{table*}

\paragraph{Related low-dimensional reductions.}
The closest works to the present paper are \cite{lohe2020on,lohe2025exactA}, where the system \eqref{eq:MP_oja_flow} was previously studied under the name \emph{cubic system}, referring to the cubic dependence of the associated vector field $b$ in \eqref{eq:vf} on the state variable $x$.
At the finite-particle level, these works show that all particle trajectories can be represented through a common time-dependent normalized linear transformation applied to the initial configuration.

Building on the finite-particle results of \cite{lohe2020on,lohe2025exactA}, the present work extends the normalized linear transformation representation to the mean-field setting, with the previous finite-particle result recovered as a special case; see Remark \ref{rem:existing_finite_sys_result} for details.
Importantly, the distribution-level formulation enables a further reduction that is specific to the mean-field setting. 
In particular, we identify invariant parametric families within which the mean-field evolution can be reduced to the dynamics of the corresponding family parameters.
For initial data in one of these families, this reduction gives a more explicit description of the evolving distribution.

These two types of reductions are closely related in spirit to the Watanabe--Strogatz theory \cite{watanabe1994constants,marvel2009identical,Lohe_2018,Lipton_2021} and the Ott--Antonsen ansatz \cite{ott2008low,ott2009long,chandra2019complexity} for the Kuramoto model \cite{kuramoto1975international,kuramoto1984chemical}.
The former shows that the Kuramoto dynamics admit a representation through a time-dependent Möbius transformation acting on the initial data, while the latter identifies a particular parametric family of distributions that is invariant under the associated mean-field dynamics.
The parallels between our results and the Watanabe--Strogatz theory and the Ott--Antonsen ansatz are discussed in greater detail in Remark \ref{rem:connect_to_WS} and Remark \ref{rem:connect_to_OA}, respectively.

\paragraph{The classical Oja flow and multi-particle Oja-type models.}
The classical Oja flow was originally derived from a modified Hebbian learning rule\footnote{Hebbian learning, originating in neuropsychology, refers to the principle that connections between neurons are strengthened through correlated activity \cite{hebb1949organization}.} for the weights of a linear neuron \cite{oja1982simplified}. 
The resulting dynamics provide a mechanism for principal component extraction and have since been extensively studied for principal component analysis \cite{oja1985stochastic,oja1989neural,oja1992principal,yan1994global,chen1998global,tsuzuki2025global}.

Multi-particle Oja-type models have also appeared in the literature.
In particular, \cite{zhang2021dissensus,zhang2022opinion} introduce Oja-type nonlinear opinion dynamics on the unit sphere in which the interactions among agents are determined by the covariance matrix or the second moment matrix of the opinion states.
These dynamics fall within the family of multi-particle Oja flows of the form \eqref{eq:MP_oja_flow}.
A different multi-particle Oja-type model, motivated by continuous-time self-attention dynamics for Transformer architectures, is developed in \cite{altafini2026multistability}.
In particular, \cite{altafini2026multistability} introduces a model termed the \emph{multiagent Oja flow} (see equation (5) therein), obtained by replacing the state-dependent attention weights in the softmax self-attention dynamics with uniform coupling.
This model is distinct from the multi-particle Oja flow \eqref{eq:MP_oja_flow} considered in the present paper.
Rather, its structure is closely related to higher-dimensional generalizations of the Kuramoto model of the type considered in \cite{chandra2019complexity,Lipton_2021}.

\paragraph{Notation.}
We use $\dummy$ as a placeholder for the argument of a function.
We write $\|\dummy\|$ for the Euclidean norm of a vector, and $\|\dummy\|_{\op}$ for the matrix operator norm.
$\CP(\BBS^{d-1})$ denotes the set of probability measures on $\BBS^{d-1}$, equipped with the Wasserstein-$1$ distance $W_1$ induced by the Euclidean distance $\|\dummy\|$ on $\R^d$.
For a measurable map $h$, $h_{\sharp}\mu$ denotes the pushforward of a measure $\mu$ under $h$.
For a square matrix $G$, we denote its adjugate by $\adj(G)$.

\section{Well-posedness}\label{sec:well-posedness}
In this section, we study the well-posedness of the multi-particle Oja flow.
We first consider the finite-particle system \eqref{eq:MP_oja_flow} in Section \ref{subsec:well-posedness_finite_sys} and then turn to the mean-field equation \eqref{eq:mf_oja_flow} in Section \ref{subsec:well-posedness_mf}, where we also establish a mean-field convergence result.

We begin by introducing the assumption on the interaction kernel $\CF$ required for our analysis and recording some basic properties of the associated vector field $b$.

\begin{assumption}\label{asm:well-posedness}
There exists a constant $L_{\CF} \geq 0$ such that, for every $\varrho, \Wvarrho \in \CP(\BBS^{d-1})$,
\begin{equation*}
    \left\| \CF[\varrho] - \CF[\Wvarrho] \right\|_{\op} \leq L_{\CF} W_1 (\varrho, \Wvarrho) \, .
\end{equation*}
\end{assumption}
In other words, we assume that $\CF$ is Lipschitz continuous with respect to the Wasserstein-$1$ distance on $\CP(\BBS^{d-1})$.
All interaction kernels in Table \ref{tab:interaction-kernels} satisfy this assumption. 
Moreover, since $\BBS^{d-1}$ is compact, $\CP(\BBS^{d-1})$ is compact with respect to $W_1$. 
The Lipschitz continuity of $\CF$ therefore implies that it is uniformly bounded, namely,
\begin{equation*}
C_{\CF} := \sup_{\varrho \in \CP(\BBS^{d-1})} \|\CF [\varrho]\|_{\op} < \infty \, .
\end{equation*}

The boundedness and Lipschitz continuity of $\CF$ give corresponding uniform boundedness and Lipschitz estimates for the velocity field associated with the multi-particle Oja flow.
These properties are collected in the following lemma, and the associated proof is provided in Appendix \ref{app:proof_sec_2}.

\begin{lemma}\label{lem:vf_property}
Suppose that Assumption \ref{asm:well-posedness} holds.
The vector field $b$ defined in \eqref{eq:vf} satisfies the following properties:
\begin{itemize}
    \item For every $x \in \BBS^{d-1}$ and $\varrho \in \CP(\BBS^{d-1})$, 
    \[
    \|b(x, \varrho)\| \leq C_{\CF} \, .
    \]

    \item For every $x, \Wx \in \BBS^{d-1}$ and every $\varrho, \Wvarrho \in \CP(\BBS^{d-1})$, 
    \begin{equation*}
    \left\| b(x, \varrho) - b (\Wx, \Wvarrho) \right\| \leq 3 C_{\CF} \|x - \Wx\| + L_{\CF} W_1 (\varrho, \Wvarrho) \, .
    \end{equation*}
\end{itemize}
\end{lemma}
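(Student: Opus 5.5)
For the first bullet, I will use that $P_x := I_d - xx^{\top}$ is, for $x \in \BBS^{d-1}$, the orthogonal projection onto $x^{\perp}$. It therefore has operator norm at most one. Hence
\[
\|b(x,\varrho)\| = \|P_x \CF[\varrho] x\| \le \|\CF[\varrho]\|_{\op}\|x\| \le C_{\CF},
\]
using $\|x\|=1$ and the definition of $C_{\CF}$.

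For the second bullet, I will split the difference by the triangle inequality, passing through the intermediate point $b(\Wx,\varrho)$:
\[
\|b(x,\varrho) - b(\Wx,\Wvarrho)\| \le \|b(x,\varrho)-b(\Wx,\varrho)\| + \|b(\Wx,\varrho)-b(\Wx,\Wvarrho)\|.
\]
The measure term is handled as in the first bullet:
\[
\|P_{\Wx}(\CF[\varrho]-\CF[\Wvarrho])\Wx\| \le \|\CF[\varrho]-\CF[\Wvarrho]\|_{\op} \le L_{\CF} W_1(\varrho,\Wvarrho)
\]
by Assumption \ref{asm:well-posedness}.

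For the spatial term, write $F := \CF[\varrho]$ and expand
\[
b(x,\varrho)-b(\Wx,\varrho) = F(x-\Wx) - \bigl(x x^{\top}Fx - \Wx\Wx^{\top}F\Wx\bigr).
\]
The first piece is bounded by $C_{\CF}\|x-\Wx\|$. For the cubic piece, I will telescope it into three differences, each changing one factor at a time:
\[
xx^{\top}Fx - \Wx\Wx^{\top}F\Wx = (x-\Wx)x^{\top}Fx + \Wx(x-\Wx)^{\top}Fx + \Wx\Wx^{\top}F(x-\Wx).
\]
Since all unit vectors have norm one, each term is bounded by $C_{\CF}\|x-\Wx\|$, which would give a total constant of $4C_{\CF}$.

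The only delicate point, and the main obstacle, is sharpening $4C_{\CF}$ to the stated $3C_{\CF}$. I will do this by grouping terms with projections instead of bounding them separately. Note that $P_x = I - xx^{\top}$, and write
\[
P_xFx - P_{\Wx}F\Wx = P_x F(x-\Wx) + (P_x - P_{\Wx})F\Wx.
\]
The first term is at most $C_{\CF}\|x-\Wx\|$, because $\|P_x\|_{\op}\le 1$. For the second term, use
\[
P_x - P_{\Wx} = \Wx\Wx^{\top} - xx^{\top} = -(x-\Wx)x^{\top} - \Wx(x-\Wx)^{\top},
\]
whose operator norm is at most $2\|x-\Wx\|$. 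Together these give $3C_{\CF}\|x-\Wx\|$. Adding the measure term then yields the claimed estimate.
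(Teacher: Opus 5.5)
Your proof is correct and is essentially the paper's argument: the bound uses $\|I_d - xx^{\top}\|_{\op} = 1$, and the Lipschitz estimate is a three-term telescoping controlled by $\|P_x - P_{\Wx}\|_{\op} \le 2\|x-\Wx\|$, $\|P_{\Wx}\|_{\op}\le 1$, and Assumption \ref{asm:well-posedness}, exactly as in the paper, which telescopes in a slightly different order and applies the projection difference to $\CF[\varrho]x$ rather than $\CF[\varrho]\Wx$. The intermediate $4C_{\CF}$ computation is unnecessary and can be dropped, while your explicit decomposition of $P_x - P_{\Wx}$ usefully justifies the factor $2$ that the paper states without comment.
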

With these preliminary estimates in place, we are ready to establish the well-posedness results for the multi-particle Oja flow.

\subsection{Well-posedness of the Finite-particle System}\label{subsec:well-posedness_finite_sys}
We begin by showing the existence and uniqueness of global solutions to the finite-particle system \eqref{eq:MP_oja_flow}.

\begin{theorem}[Well-posedness of the finite-particle system]\label{thm:well-posedness_finite_sys}
Suppose that Assumption \ref{asm:well-posedness} holds.
Then, for every $n \in \N$, the system \eqref{eq:MP_oja_flow} admits a unique global solution $(x_1, x_2, \dots, x_n) \in C^1 ([0, \infty); (\BBS^{d-1})^n)$ for any initial configuration in $(\BBS^{d-1})^n$.
\end{theorem}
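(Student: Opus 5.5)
The plan is to treat \eqref{eq:MP_oja_flow} as an ODE on $(\R^d)^n$. The difficulty is that the vector field $b$ and the kernel $\CF$ are only defined for points on the sphere and for probability measures on $\BBS^{d-1}$. We therefore first extend the right-hand side globally in a Lipschitz way. Let $\pi:\R^d\to \overline{B}(0,1)$ be the radial retraction $\pi(y)=y/\max(1,\|y\|)$; it is $1$-Lipschitz, but it maps into the closed ball rather than the sphere. To obtain sphere-valued points, we instead use a retraction onto the sphere defined only on a neighborhood. A concrete choice is $\pi(y)=y/\|y\|$ on the annulus $\{1/2\le \|y\|\le 2\}$, which is Lipschitz with constant at most $2$ there.

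On $U:=\{\mathbf{y}=(y_1,\dots,y_n): 1/2<\|y_k\|<2 \ \forall k\}$ we define
\begin{equation*}
G_k(\mathbf{y}) := \left(I_d - \pi(y_k)\pi(y_k)^\top\right)\CF\Big[\tfrac1n\textstyle\sum_{j}\delta_{\pi(y_j)}\Big]\pi(y_k).
\end{equation*}
This satisfies $G_k(\mathbf{y}) = b(\pi(y_k),\rho^\pi_n)$ with $\rho_n^\pi$ the empirical measure of the projected points. By Lemma \ref{lem:vf_property}, together with the elementary bound
\begin{equation*}
W_1\Big(\tfrac1n\textstyle\sum_j\delta_{z_j},\tfrac1n\sum_j\delta_{\tilde z_j}\Big)\le \tfrac1n\sum_j\|z_j-\tilde z_j\|
\end{equation*}
(obtained from the diagonal coupling), $G$ is bounded by $C_{\CF}$ and globally Lipschitz on $U$. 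The Lipschitz constant is of the form $2(3C_{\CF}+L_{\CF})$ in a suitable product norm. The Picard--Lindelöf theorem then gives a unique local $C^1$ solution of $\dot{\mathbf y}=G(\mathbf y)$ starting from any initial configuration in $(\BBS^{d-1})^n\subset U$.

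The key step is invariance of the sphere. Along the solution,
\begin{equation*}
\tfrac{d}{dt}\|y_k\|^2 = 2 y_k^\top G_k(\mathbf y) = 2\|y_k\|\,\pi(y_k)^\top (I_d-\pi(y_k)\pi(y_k)^\top)\CF[\cdot]\pi(y_k)=0,
\end{equation*}
since $\pi(y_k)=y_k/\|y_k\|$ and $(I-zz^\top)$ annihilates $z$ for unit $z$. Hence $\|y_k(t)\|=1$ for all $t$ in the existence interval, so the solution never approaches $\partial U$. Because it stays in the sphere, $\pi$ acts as the identity along it, and the solution solves \eqref{eq:MP_oja_flow}. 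Moreover $G$ is bounded on $U$ and the trajectory remains in a compact subset $(\BBS^{d-1})^n$ of $U$. The standard continuation (blow-up) criterion therefore extends the solution to $[0,\infty)$, and it lies in $C^1([0,\infty);(\BBS^{d-1})^n)$ because $G$ is continuous.

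For uniqueness, any $C^1$ solution of \eqref{eq:MP_oja_flow} with values in $(\BBS^{d-1})^n$ is also a solution of the extended ODE on $U$, since $\pi$ is the identity there. Uniqueness for Lipschitz ODEs then applies. Equivalently, we can apply Grönwall's inequality directly to $\sum_k\|x_k-\tilde x_k\|$ using the second bullet of Lemma \ref{lem:vf_property}. The main technical point I expect is the extension step: Lemma \ref{lem:vf_property} is only stated for arguments on the sphere, so the Lipschitz estimate for $G$ has to be routed through $\pi$. Alternatively, one can avoid the extension altogether by running a Picard iteration directly in the complete metric space $C([0,T];(\BBS^{d-1})^n)$. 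The difficulty there is that the Picard map need not preserve the sphere, which is exactly why the retraction is used.
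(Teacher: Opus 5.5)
Your proposal is correct and follows essentially the same route as the paper. You use the same ingredients: a uniform bound and Lipschitz estimate for the stacked vector field from Lemma \ref{lem:vf_property} and the diagonal coupling, tangency ($x_k^{\top} B_k(\Bx) = 0$) to stay on the sphere, and compactness of $(\BBS^{d-1})^n$ for global existence. The one difference is that the paper simply cites existence and uniqueness for ODEs on manifolds, whereas you prove that step directly by extending the field to the neighborhood $U$ via $y \mapsto y/\|y\|$ and checking that $\|y_k\|$ is conserved; your Lipschitz constant $2$ is right, since $\|y/\|y\| - z/\|z\|\| \le \|y - z\|/\sqrt{\|y\|\,\|z\|}$.
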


The main idea of the proof is to show that the vector field corresponding to \eqref{eq:MP_oja_flow} is uniformly bounded, globally Lipschitz, and tangent to the product manifold $(\BBS^{d-1})^n$.
Standard existence and uniqueness results for ODEs on manifolds then give a unique local solution, while the compactness of the state space ensures that the solution extends globally in time; see, for example, \cite{hartman2002ordinary}.

\begin{proof}
Fix $n \in \N$.
For a particle configuration $\Bx := \big(x_1, \dots, x_n\big)^{\top} \in (\BBS^{d-1})^{n}$, define the associated empirical distribution by $\rho_n^{\Bx} = \frac{1}{n} \sum_{k=1}^n \delta_{x_k}$.
For each $k=1,2,\dots, n$, set
\begin{equation*}
B_k (\Bx) := b \big(x_k, \rho_n^{\Bx} \big),
\end{equation*}
and define the full vector field
\begin{equation*}
    \BB (\Bx) := \left( B_1 (\Bx), \dots, B_n (\Bx)\right)^{\top} \in \R^{nd} \, .
\end{equation*}
The finite-particle system \eqref{eq:MP_oja_flow} can then be written as
\begin{equation}\label{eq:MP_oja_flow_rewrite}
\dot{\Bx}(t) = \BB (\Bx(t) ) \, .
\end{equation}

We equip $(\BBS^{d-1})^n$ with the metric
\begin{equation*}
    d_{\infty} (\Bx, \BWx) := \max_{1 \leq k \leq n} \|x_k - \Wx_k\| \, .
\end{equation*}
For any two particle configurations $\Bx, \BWx \in (\BBS^{d-1})^n$, the coupling that pairs $x_k$ with $\Wx_k$ gives
\begin{equation*}
    W_1 (\rho_n^{\Bx}, \rho_n^{\BWx}) \leq \frac{1}{n} \sum_{k=1}^n \|x_k - \Wx_k\| \leq d_{\infty} (\Bx, \BWx) \, .
\end{equation*}
By Lemma \ref{lem:vf_property}, for every $k=1,\ldots, n$,
\begin{equation*}
    \|B_k(\Bx)\| = \|b(x_k, \rho_n^{\Bx})\| \leq C_{\CF},
\end{equation*}
and
\begin{equation*}
\|B_k (\Bx) - B_k (\BWx)\| = \|b(x_k, \rho_n^{\Bx}) - b(\Wx_k, \rho_n^{\BWx})\| \leq 3C_{\CF} \|x_k - \Wx_k\| + L_{\CF} W_1 (\rho_n^{\Bx}, \rho_n^{\BWx}) \leq \left(3C_{\CF} + L_{\CF} \right) d_{\infty} (\Bx, \BWx) \, .
\end{equation*}
Moreover, a straightforward calculation shows that $x_k^{\top} B_k (\Bx) = 0$, and hence $\BB(\Bx)$ is tangent to $(\BBS^{d-1})^n$ at $\Bx$.
Therefore, the vector field $\BB$ of the system \eqref{eq:MP_oja_flow_rewrite} is uniformly bounded, globally Lipschitz, and tangent to the product manifold $(\BBS^{d-1})^n$.

Then, standard existence and uniqueness results for ODEs on manifolds imply that, for every initial configuration in $(\BBS^{d-1})^n$, \eqref{eq:MP_oja_flow_rewrite} admits a unique maximal solution.
Finally, since $(\BBS^{d-1})^n$ is compact, the maximal solution cannot escape the state space in finite time.
It therefore extends to all $t \geq 0$, which completes the proof.
\end{proof}

\subsection{Well-posedness of the Mean-field Equation}\label{subsec:well-posedness_mf}
We begin by recalling the definition of a weak solution.
A curve $\rho \in C([0,\infty); \CP(\BBS^{d-1}))$ is called a weak solution to \eqref{eq:mf_oja_flow} with initial condition $\rho_0$ if $\rho(0) = \rho_0$ and, for every test function $\varphi \in C^1 (\BBS^{d-1})$ and every $t \geq 0$,
\begin{equation}
\begin{aligned}
\int_{\BBS^{d-1}} \varphi (x) d\rho(t, x) - \int_{\BBS^{d-1}} \varphi (x) d\rho_0(x) &= \int_0^{t} \int_{\BBS^{d-1}}  \nabla_{\BBS^{d-1}} \varphi(x)^{\top} b(x, \rho(s))  d\rho(s,x) ds \, .
\end{aligned}
\end{equation}

\begin{theorem}[Well-posedness of the mean-field equation]\label{thm:well-posedness_mf}
Suppose that Assumption \ref{asm:well-posedness} holds.
Then, for every initial distribution $\rho_0 \in \CP(\BBS^{d-1})$, there exists a unique weak solution $\rho \in C ([0, \infty); \CP(\BBS^{d-1}))$ to the mean-field equation \eqref{eq:mf_oja_flow}.
Moreover, the solution admits the characteristic representation
\begin{equation}\label{eq:pushforward_repre}
\rho(t) = (\Psi_t)_{\sharp} \rho_0,
\end{equation}
where $\Psi_t: \BBS^{d-1} \to \BBS^{d-1} $ is the characteristic flow associated with $\rho$, determined by
\begin{equation}\label{eq:charac_flow}
\frac{d}{dt}\Psi_t (x) = b (\Psi_t(x), \rho(t)), \qquad \Psi_0(x) = x \, .
\end{equation}

The solution is also stable with respect to the initial data in the Wasserstein-$1$ distance.
More specifically, let $\rho$ and $\Wrho$ be the solutions corresponding to initial data $\rho_0$ and $\Wrho_0$, respectively. Then, for every $t \geq 0$,
\begin{equation}\label{eq:ws_stability}
W_1 (\rho(t), \Wrho(t)) \leq e^{(3C_{\CF} + L_{\CF})t} W_1 (\rho_0, \Wrho_0) \, .
\end{equation}
\end{theorem}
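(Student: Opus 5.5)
The plan is the standard Dobrushin-type fixed-point argument on the space of measure-valued curves, using Lemma \ref{lem:vf_property} throughout.

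\textbf{Existence.} Fix $T>0$ and let $\mathcal{C}_T := C([0,T];\CP(\BBS^{d-1}))$. Equip it with the weighted metric
\[
D_\lambda(\mu,\nu) := \sup_{t\in[0,T]} e^{-\lambda t} W_1(\mu(t),\nu(t)).
\]
This space is complete, since $(\CP(\BBS^{d-1}),W_1)$ is compact.

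Given $\mu\in\mathcal{C}_T$, consider the non-autonomous vector field $(t,x)\mapsto b(x,\mu(t))$. By Lemma \ref{lem:vf_property} it is bounded by $C_{\CF}$, Lipschitz in $x$ with constant $3C_{\CF}$, continuous in $t$, and tangent to the sphere. It therefore generates a flow $\Psi^\mu_t$ of $\BBS^{d-1}$, exactly as in Theorem \ref{thm:well-posedness_finite_sys}. Define the map $\Gamma(\mu)(t) := (\Psi^\mu_t)_\sharp\rho_0$. Continuity in $t$ follows from $\|\Psi^\mu_t(x)-\Psi^\mu_s(x)\|\le C_{\CF}|t-s|$.

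For two curves $\mu,\nu$, set $e(t):=\|\Psi^\mu_t(x)-\Psi^\nu_t(x)\|$. Lemma \ref{lem:vf_property} and Grönwall give
\[
e(t)\le L_{\CF}\int_0^t e^{3C_{\CF}(t-s)}\,W_1(\mu(s),\nu(s))\,ds.
\]
Integrating against $\rho_0$ with the coupling $(\Psi^\mu_t,\Psi^\nu_t)_\sharp\rho_0$ bounds $W_1(\Gamma\mu(t),\Gamma\nu(t))$ by the same expression. Consequently
\[
D_\lambda(\Gamma\mu,\Gamma\nu)\le \frac{L_{\CF}}{\lambda-3C_{\CF}}\,D_\lambda(\mu,\nu),
\]
so $\Gamma$ is a contraction for $\lambda$ large. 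Banach's theorem gives a unique fixed point on each $[0,T]$. By uniqueness these fixed points are compatible across $T$, giving $\rho$ on $[0,\infty)$ with $\rho(t)=(\Psi_t)_\sharp\rho_0$, which is \eqref{eq:pushforward_repre}.

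The fixed point is a weak solution. For $\varphi\in C^1$, write $\int\varphi\,d\rho(t)=\int\varphi(\Psi_t(x))\,d\rho_0(x)$. Differentiate in $t$ using the chain rule and the tangency of $b$, which lets one replace the ambient gradient by $\nabla_{\BBS^{d-1}}\varphi$. Then integrate in time.

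\textbf{Uniqueness.} This is the step I expect to be the main obstacle. The contraction only gives uniqueness among curves of the form $\Gamma(\mu)=\mu$, so one must show that any weak solution $\rho'$ satisfies $\rho'=\Gamma(\rho')$.

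Freeze $\mu:=\rho'$. Then $\rho'$ is a weak solution of the \emph{linear} continuity equation with the Lipschitz field $b(\dummy,\mu(t))$, and $(\Psi^\mu_t)_\sharp\rho_0$ is another one. I would invoke uniqueness for linear continuity equations with Lipschitz velocity on a compact manifold. To prove it directly, use a duality argument: for $\psi\in C^1$, the function $\varphi(s,x)=\psi(\Psi^\mu_{t}\circ(\Psi^\mu_s)^{-1}(x))$ solves the backward transport equation. Testing the weak formulation with time-dependent $\varphi$ (after extending the weak formulation to $C^1$ space-time test functions by a standard approximation) shows that $\int\psi\,d\rho'(t)=\int\psi\circ\Psi^\mu_t\,d\rho_0$. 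The technical point is the $C^1$ regularity of the flow in $x$. If $\CF$ only yields Lipschitz $b$, I would instead mollify $\psi$, use Lipschitz test functions via density, or cite Ambrosio--Gigli--Savaré's superposition principle. In fact $b$ is smooth in $x$ for fixed measure, since it is a cubic polynomial, so the flow is $C^1$ in $x$ and the duality argument goes through cleanly.

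\textbf{Stability.} Let $\rho,\Wrho$ be solutions with flows $\Psi,\WF$-type flows $\Psi_t,\widetilde\Psi_t$. Take an optimal coupling $\pi$ of $\rho_0$ and $\Wrho_0$, and set
\[
\delta(t):=\int\|\Psi_t(x)-\widetilde\Psi_t(y)\|\,d\pi(x,y)\ \ge\ W_1(\rho(t),\Wrho(t)).
\]
Lemma \ref{lem:vf_property} gives, almost everywhere,
\[
\tfrac{d}{dt}\|\Psi_t(x)-\widetilde\Psi_t(y)\|\le 3C_{\CF}\|\Psi_t(x)-\widetilde\Psi_t(y)\|+L_{\CF}W_1(\rho(t),\Wrho(t)).
\]
Integrating against $\pi$ yields $\delta'\le(3C_{\CF}+L_{\CF})\delta$. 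Grönwall then gives $\delta(t)\le e^{(3C_{\CF}+L_{\CF})t}\delta(0)=e^{(3C_{\CF}+L_{\CF})t}W_1(\rho_0,\Wrho_0)$, which is \eqref{eq:ws_stability}.
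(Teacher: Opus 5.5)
Your proposal is correct and follows essentially the same route as the paper. Both use the Dobrushin-type contraction for $\mu \mapsto (\Psi^\mu_t)_\sharp\rho_0$ under the weighted metric, verify the weak form by differentiating $\int \varphi(\Psi_t(x))\,d\rho_0(x)$, obtain uniqueness by freezing the velocity field of an arbitrary weak solution, and prove stability via an optimal coupling and Gr\"onwall with constant $3C_{\CF}+L_{\CF}$. The only difference is in the uniqueness step: the paper cites the characteristic representation for continuity equations with Lipschitz fields, whereas you sketch the duality argument that justifies it, using that $b(\cdot,\mu(t))$ is polynomial in $x$.
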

The proof of Theorem \ref{thm:well-posedness_mf} is adapted from the classical fixed-point argument for measure-dependent characteristic flows \cite{dobrushin1979valsov,canizo2011well}.
We provide the details at the end of this section.
Before turning to the proof, we discuss some interpretations and consequences of the theorem.

\paragraph{Representative-particle interpretation.}
The characteristic representation  \eqref{eq:pushforward_repre} admits a natural probabilistic interpretation.
Let $X_0$ be a random variable with law $\rho_0$, and define 
\[
X(t) := \Psi_t(X_0) \, .
\]
Then $\Law(X(t)) = (\Psi_t)_{\sharp} \rho_0 = \rho(t)$, and $X(t)$ satisfies the characteristic equation \eqref{eq:representative_particle_ode}.
Thus, $X(t)$, obtained by transporting the random initial state $X_0$ through the characteristic flow $\Psi_t$, may be interpreted as the state of a representative particle in the mean-field limit.

\paragraph{Consistency of the particle and mean-field descriptions.}
The characteristic representation \eqref{eq:pushforward_repre} also shows that the finite-particle system \eqref{eq:MP_oja_flow} is consistent with the mean-field equation \eqref{eq:mf_oja_flow}.
More precisely, consider an empirical initial distribution $\rho_n(0) = \frac{1}{n} \sum_{k=1}^n \delta_{x_{k,0}}$, and let $\rho_n$ be the unique solution to the mean-field equation with this initial condition.
Denote its associated characteristic flow by $\Psi_t^n$.
Then the characteristic representation of $\rho_n$ is given by
\begin{equation}
\rho_n(t) = (\Psi_t^n)_{\sharp} \rho_n(0) = \frac{1}{n} \sum_{k=1}^n \delta_{\Psi_t^n (x_{k,0})} \, .
\end{equation}
Define $x_k(t) := \Psi_t^n (x_{k,0})$ for $k=1,2, \dots, n$. 
Then the characteristic flow \eqref{eq:charac_flow} gives
\begin{equation*}
\dot{x}_k(t) = b (x_k(t), \rho_n(t)) = (I_d - x_k(t) x_k(t)^{\top}) \CF[\rho_n(t)] x_k(t) \, .
\end{equation*}
Therefore, $(x_1, \dots, x_n)$ solves the finite-particle system \eqref{eq:MP_oja_flow}.
By the uniqueness result established in Theorem \ref{thm:well-posedness_finite_sys}, it is precisely the unique solution to that system.
Thus, the finite-particle dynamics are recovered exactly by restricting the mean-field equation to empirical measures.

Combining this consistency property with the stability estimate \eqref{eq:ws_stability}, we obtain the following mean-field convergence result.

\begin{corollary}[Mean-field convergence]\label{cor:mf_app}
Suppose that Assumption \ref{asm:well-posedness} holds.
For each $n \geq 1$, let $\rho_n(0) = \frac{1}{n} \sum_{k=1}^n \delta_{x_{k,0}}$ be an empirical initial distribution, and let $\rho_n(t)$ denote the empirical measure associated with the solution of the finite-particle system \eqref{eq:MP_oja_flow}.
Let $\rho(t)$ be the solution of the mean-field equation \eqref{eq:mf_oja_flow} with initial condition $\rho(0) \in \CP(\BBS^{d-1})$.
Then, for every $t \geq 0$,
\begin{equation}\label{eq:mf_approximation}
W_1 (\rho_n(t), \rho(t)) \leq e^{(3C_{\CF} + L_{\CF})t} W_1 (\rho_n(0), \rho(0)) \, . 
\end{equation}
Hence, as $n \rightarrow \infty$, whenever $W_1(\rho_n(0), \rho(0)) \rightarrow 0$, we have $\sup_{0 \leq t \leq T} W_1 (\rho_n(t), \rho(t)) \rightarrow 0$ for every finite $T > 0$.
\end{corollary}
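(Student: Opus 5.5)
The plan is to combine the consistency property with the stability estimate \eqref{eq:ws_stability} of Theorem \ref{thm:well-posedness_mf}.

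First, fix $n$. Let $\bar\rho_n$ be the unique weak solution of the mean-field equation \eqref{eq:mf_oja_flow} with the empirical initial datum $\rho_n(0)$, which exists by Theorem \ref{thm:well-posedness_mf}. By the consistency discussion preceding the corollary, $\bar\rho_n(t) = \frac1n\sum_k \delta_{\Psi^n_t(x_{k,0})}$. The curves $x_k(t)=\Psi^n_t(x_{k,0})$ solve \eqref{eq:MP_oja_flow}, so uniqueness in Theorem \ref{thm:well-posedness_finite_sys} identifies them with the particle trajectories. Hence $\bar\rho_n(t)$ coincides with the empirical measure $\rho_n(t)$ of the finite-particle system for all $t\ge 0$.

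One subtlety needs care here: whether the particles $x_{k,0}$ are distinct. If some initial points coincide, the empirical measure has atoms of weight $m/n$. The identification still holds, since coinciding particles follow the same trajectory by uniqueness, and the empirical measure is the pushforward counting multiplicity. Along the way I will also check that the characteristic flow $\Psi^n_t$ is well defined and $C^1$ in $t$. This follows from the Lipschitz bound of Lemma \ref{lem:vf_property} applied with $\rho$ fixed to $\bar\rho_n$, which is continuous in time.

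Second, apply \eqref{eq:ws_stability} with $\rho_0=\rho_n(0)$ and $\Wrho_0=\rho(0)$. This gives $W_1(\rho_n(t),\rho(t)) \le e^{(3C_{\CF}+L_{\CF})t}W_1(\rho_n(0),\rho(0))$, which is \eqref{eq:mf_approximation}.

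Third, for the uniform convergence, note that the prefactor is increasing in $t$. Taking the supremum over $[0,T]$ gives
\[
\sup_{0\le t\le T}W_1(\rho_n(t),\rho(t))\le e^{(3C_{\CF}+L_{\CF})T}\,W_1(\rho_n(0),\rho(0)),
\]
and the right-hand side tends to $0$ as $n\to\infty$.

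There is no real obstacle, since the heavy lifting sits in Theorem \ref{thm:well-posedness_mf}. The only step needing care is the rigorous identification of the particle empirical measure with the mean-field solution, i.e.\ the consistency argument above.
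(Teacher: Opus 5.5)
Your proposal is correct and follows the paper's route. The paper likewise identifies the particle empirical measure with the mean-field solution started from $\rho_n(0)$, using the consistency argument and uniqueness in Theorem \ref{thm:well-posedness_finite_sys}, then applies \eqref{eq:ws_stability} and takes the supremum over $[0,T]$; your concern about coinciding initial points is harmless but unnecessary, since $(\Psi^n_t)_{\sharp}\rho_n(0)=\frac{1}{n}\sum_{k}\delta_{\Psi^n_t(x_{k,0})}$ holds by linearity of the pushforward regardless of multiplicities.
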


\begin{proof}

Applying the Wasserstein stability estimate \eqref{eq:ws_stability} to $\rho_n$ and $\rho$, we obtain \eqref{eq:mf_approximation}.
Taking the supremum over $t \in [0,T]$ gives the claimed convergence.
\end{proof}


\begin{remark}
The estimate \eqref{eq:mf_approximation} deteriorates exponentially as $t$ increases and therefore gives convergence only on bounded time intervals.
Establishing uniform-in-time mean-field convergence, possibly under additional assumptions on $\CF$, is left for future work.
\end{remark}

We now conclude this section with the proof of Theorem \ref{thm:well-posedness_mf}.

\begin{proof}[Proof of Theorem \ref{thm:well-posedness_mf}]
We first establish the existence and uniqueness of a weak solution.
The argument is divided into two steps.

\paragraph{Step 1: Construction of a solution by a fixed-point argument.} 
Fix $T > 0$.
Given a prescribed measure-valued curve $\mu \in C([0,T]; \CP(\BBS^{d-1}))$ satisfying $\mu(0) = \rho_0$,
consider the characteristic equation
\begin{equation}\label{eq:chara_flow_in_proof}
    \DPsi^{\mu}_t (x) = b (\PsiMu_t (x), \mu(t)), \qquad \PsiMu_0(x) = x,
\end{equation}
where $b$ is given in \eqref{eq:vf}.
Since $t \mapsto \mu(t)$ is continuous with respect to $W_1$ and $\CF$ is Lipschitz continuous under Assumption \ref{asm:well-posedness}, the map $t \mapsto \CF[\mu(t)]$ is continuous.
Together with Lemma \ref{lem:vf_property}, this implies that the velocity field $(t, x) \mapsto b(x, \mu(t))$ is continuous in time, globally Lipschitz in space, and uniformly bounded.
Moreover, the velocity field is tangent to $\BBS^{d-1}$.
Standard existence and uniqueness theory for ODEs on manifolds therefore implies that \eqref{eq:chara_flow_in_proof} admits a unique solution $\PsiMu_t(x)$ on $[0,T]$; see, for example, \cite{hartman2002ordinary}.

Define the map $\CT : C([0,T]; \CP(\BBS^{d-1})) \to C ([0,T]; \CP(\BBS^{d-1}))$ by
\begin{equation*}
    (\CT \mu)(t) := (\PsiMu_t)_{\sharp} \rho_0 \, .
\end{equation*}
Then a fixed point $\mu = \CT \mu$ satisfies
\begin{equation*}
    \mu(t) = (\PsiMu_t)_{\sharp} \rho_0,
\end{equation*}
and is therefore transported by the characteristic flow generated by itself.
We now show that $\CT$ is a contraction with respect to a suitable metric.

Let $\mu, \Wmu \in C([0,T]; \CP(\BBS^{d-1}))$.
For a fixed $x \in \BBS^{d-1}$, set
\begin{equation*}
    X(t, x) := \PsiMu_t(x), \qquad \WX(t, x) := \PsiWMu_t(x) \, .
\end{equation*}
Since $X(0, x) = \WX(0, x) = x$, Lemma \ref{lem:vf_property} gives
\begin{equation*}
\big\|X(t, x) - \WX(t, x) \big\| \leq 3 C_{\CF} \int_0^t \big\|X(s, x) - \WX(s, x) \big\| ds + L_{\CF} \int_0^t W_1 (\mu(s), \Wmu(s))ds \, .
\end{equation*}
By Grönwall's inequality,
\begin{equation*}
    \big\| X(t, x) - \WX(t, x) \big\| \leq L_{\CF} \int_0^t e^{3C_{\CF}(t-s)} W_1 (\mu(s), \Wmu(s)) ds \, .
\end{equation*}
Since $(\PsiMu_t, \PsiWMu_t)_{\sharp} \rho_0$ is a coupling between $(\CT \mu)(t)$ and $(\CT \Wmu)(t)$, it follows that
\begin{equation}\label{eq:contraction_proof_aux}
W_1 ( (\CT \mu)(t), (\CT \Wmu) (t) ) \leq L_{\CF} \int_0^t e^{3C_{\CF} (t-s)} W_1 (\mu(s), \Wmu(s)) ds \, .
\end{equation}
For $\lambda > 0$, introduce the weighted metric
\begin{equation*}
d_{\lambda} (\mu, \Wmu) := \sup_{0\leq t \leq T} e^{-\lambda t} W_1 (\mu(t), \Wmu(t)) \, .
\end{equation*}
Multiplying the preceding estimate \eqref{eq:contraction_proof_aux} by $e^{-\lambda t}$ and using the definition of $d_{\lambda}$, we obtain, for $\lambda > 3C_{\CF}$,
\begin{equation*}
e^{-\lambda t} W_1 ( (\CT \mu)(t), (\CT \Wmu) (t) ) \leq L_{\CF} d_{\lambda} (\mu, \Wmu) \int_0^t e^{-(\lambda - 3C_{\CF})(t-s)} ds \leq \frac{L_{\CF}}{\lambda - 3 C_{\CF}} d_{\lambda} (\mu, \Wmu)\, .
\end{equation*}
Taking the supremum over $t \in [0,T]$ gives
\begin{equation*}
d_{\lambda} (\CT \mu, \CT \Wmu) \leq \frac{L_{\CF}}{\lambda - 3 C_{\CF}} d_{\lambda} (\mu, \Wmu) \, .
\end{equation*}
Choosing $\lambda > 3 C_{\CF} + L_{\CF}$ makes $\CT$ a strict contraction.

Because $\CP(\BBS^{d-1})$ is complete under $W_1$, the space $C([0,T]; \CP(\BBS^{d-1}))$ is complete under the weighted metric $d_{\lambda}$.
The Banach fixed-point theorem \cite{banach1922sur} therefore gives a unique fixed point $\rho \in C([0,T]; \CP(\BBS^{d-1}))$, satisfying 
\begin{equation*}
\rho(t) = (\Psi_t^{\rho})_{\sharp} \rho_0 \, .
\end{equation*}
Since $T > 0$ is arbitrary and, by uniqueness, solutions constructed on different time intervals agree on their overlaps, they define a global characteristic solution $\rho \in C([0,\infty); \CP(\BBS^{d-1}))$.

\paragraph{Step 2: Verification of the weak formulation.}
Next, we verify that the fixed point $\rho$ constructed in Step $1$ is the unique weak solution to the mean-field equation \eqref{eq:mf_oja_flow}.

For every test function $\varphi \in C^1 (\BBS^{d-1})$ and every $t \geq 0$, the definition of the pushforward measure gives
\begin{equation*}
\int_{\BBS^{d-1}} \varphi (x) d\rho(t, x) = \int_{\BBS^{d-1}} \varphi (\PsiRho_t(x)) d\rho_0(x) \, .
\end{equation*}
Differentiating this identity with respect to $t$ and using the characteristic equation \eqref{eq:chara_flow_in_proof}, we obtain
\begin{equation}\label{eq:verify_weak_form}
\frac{d}{dt} \int_{\BBS^{d-1}} \varphi(x) d\rho(t,x) = \int_{\BBS^{d-1}} \nabla_{\BBS^{d-1}} \varphi( \PsiRho_t(x))^{\top} b (\PsiRho_t(x), \rho(t)) d\rho_0(x) = \int_{\BBS^{d-1}} \nabla_{\BBS^{d-1}} \varphi (x)^{\top} b(x, \rho(t)) d\rho(t, x) \, .  
\end{equation}
Integrating the above identity over $[0,t]$ shows that $\rho$ satisfies the weak formulation of  \eqref{eq:mf_oja_flow} for every $t \geq 0$.
Hence, $\rho$ is a weak solution of the mean-field equation.

It remains to prove uniqueness.
Let $\BarRho$ be any weak solution to \eqref{eq:mf_oja_flow} with initial condition $\rho_0$, and let $\PsiBRho$ denote the characteristic flow \eqref{eq:chara_flow_in_proof}  generated by the curve $\BarRho$.
Once $\BarRho$ is fixed, the map $(t,x) \mapsto b(x, \BarRho(t))$ can be regarded as a given time-dependent velocity field.
By Lemma \ref{lem:vf_property}, this velocity field is continuous in time and globally Lipschitz in space.
The classical characteristic representation for continuity equations with Lipschitz velocity fields therefore implies that $\BarRho(t) = (\PsiBRho_t)_{\sharp} \rho_0$; see, for example, \cite{ambrosio2014continuity}.
Therefore, $\BarRho = \CT \BarRho$, which shows that every weak solution of \eqref{eq:mf_oja_flow} is a fixed point of $\CT$.
By the uniqueness of the fixed point established in Step $1$, we conclude that $\BarRho = \rho$, which proves uniqueness of the weak solution.

\paragraph{Stability Estimate.}
Lastly, we perform the stability estimate of the solution with respect to the initial data.
Let $\rho$ and $\Wrho$ be solutions corresponding to initial data $\rho_0$ and $\Wrho_0$, respectively.
Let $\pi_0$ be an optimal coupling of $\rho_0$ and $\Wrho_0$.
For $(x, y) \in \BBS^{d-1} \times \BBS^{d-1}$, define
\begin{equation*}
X(t, x) := \PsiRho_t (x), \qquad \WX (t, y) := \PsiWRho_t (y),
\end{equation*}
and set
\begin{equation*}
    D(t) := \int_{\BBS^{d-1} \times \BBS^{d-1}} \left\| X(t,x) - \WX(t, y) \right\| d\pi_0 (x, y) \, .
\end{equation*}
Since $(X(t, \dummy), \WX(t, \dummy))_{\sharp} \pi_0$ is a coupling between $\rho(t)$ and $\Wrho(t)$, we have $W_1 (\rho(t), \Wrho(t)) \leq D(t)$.
Together with Lemma \ref{lem:vf_property}, this implies that
\begin{equation*}
D(t) \leq D(0) + 3C_{\CF} \int_0^t D(s) ds + L_{\CF} \int_0^t W_1 (\rho(s), \Wrho(s))ds \leq D(0) + (3 C_{\CF} + L_{\CF}) \int_0^t D(s) ds \, .
\end{equation*}
Grönwall's inequality then gives
\begin{equation*}
    D(t) \leq e^{(3C_{\CF} + L_{\CF}) t} D(0) \, .
\end{equation*}
Since $\pi_0$ is an optimal coupling, we have $D(0) = W_1 (\rho_0, \Wrho_0)$.
Combining this identity with the preceding estimate gives the Wasserstein stability bound \eqref{eq:ws_stability}.
\end{proof}

\section{Low-dimensional Structures}\label{sec:low-dim_str}
In this section, we explore the low-dimensional structures underlying the multi-particle Oja flow. 
In Section \ref{subsec:NL_transform}, we first study a normalized linear transformation representation of the mean-field dynamics \eqref{eq:mf_oja_flow}.  
Building on this representation,  Section \ref{subsec:inv_param_fam} investigates invariant parametric families of the mean-field equation, within which the evolution of the mean-field distribution can be reduced to the dynamics of the corresponding family parameters.

Throughout the following analysis, $\CF$ satisfies Assumption \ref{asm:well-posedness}, and we assume without loss of generality that 
\begin{equation}\label{eq:traceless_asm}
    \tr{\CF[\varrho]} = 0, \qquad \text{for all } \varrho \in \CP(\BBS^{d-1}) \, .
\end{equation}
Indeed, if this condition does not hold, we may replace $\CF$ by its trace-free part $\CF - \frac{\tr{\CF}}{d} I_d$, leaving the dynamics \eqref{eq:MP_oja_flow} unchanged.
This follows from the shift-invariance property of the associated vector field.
Specifically, since $(I_d - xx^{\top}) x = 0$, for any $c \in \R$, $x \in \BBS^{d-1}$, and $\varrho \in \CP(\BBS^{d-1})$, we have
\begin{equation*}
\left( I_d - xx^{\top} \right) \left( \CF[\varrho] - c I_d  \right) x = \left( I_d - xx^{\top} \right) \CF[\varrho] x \, .
\end{equation*}

\subsection{Representation via Normalized Linear Transformations}\label{subsec:NL_transform}
We begin by introducing the normalized linear transformation \cite{tyler1987statistical,Mahony2003} and collecting several of its basic properties.
We denote the set of invertible matrices in $\R^{d\times d}$ by
\begin{equation*}
    \GL (d, \R) := \left\{ G \in \R^{d\times d}: \det G \neq 0 \right\} \, .
\end{equation*}
For $G \in \GL(d, \R)$, we define the normalized linear (NL) transformation associated with $G$ by
\begin{equation}\label{eq:NL_transform}
 \Phi_G: \BBS^{d-1} \to \BBS^{d-1}, \qquad \Phi_{G}(x) := \frac{Gx}{\|Gx\|} \, .
\end{equation}
This transformation is also referred to as the \emph{unit map} in \cite{lohe2020on,lohe2025exactA,lohe2025exactB}.

Since $G$ is invertible, $G x\neq 0$ for every $x \in \BBS^{d-1}$, and hence $\Phi_G$ is well-defined.
Moreover, for any $G, G' \in \GL(d, \R)$, the NL transformation satisfies the following properties:
\begin{itemize}
    \item \textbf{Bijectivity.} The map $\Phi_G$ is bijective, with inverse $\Phi_{G}^{-1} = \Phi_{G^{-1}}$.
    
    \item \textbf{Scale invariance.} For any nonzero scalar $c \in \R \setminus \{0\}$, $\Phi_{cG} = \sgn {c} \Phi_{G}$.

    \item \textbf{Composition.} $\Phi_{G'} \circ \Phi_{G} = \Phi_{G' G}$.
\end{itemize}
The scale-invariance property shows that every normalized linear transformation can be uniquely represented by a matrix whose determinant has unit absolute value.
Indeed, for any $G \in \GL(d, \R)$, we can always rescale it by the scalar $|\det G|^{-1/d}$, leaving the associated NL transformation unchanged.
Accordingly, we introduce the set
\begin{equation*}
   \SLpm (d, \R) := \left\{ G \in \R^{d\times d}: |\det G| =  1 \right\} \, .
\end{equation*}

We are now ready to establish an exact representation of the mean-field solution to \eqref{eq:mf_oja_flow} in terms of normalized linear transformations.

\begin{theorem}
[Representation via normalized linear transformations]\label{thm:PI_mf_sys}
Suppose that Assumption \ref{asm:well-posedness} and the tracelessness condition \eqref{eq:traceless_asm} hold.
Let $\rho(t)$ be the unique solution to \eqref{eq:mf_oja_flow} with initial condition $\rho(0) = \rho_0 \in \CP(\BBS^{d-1})$.
Then $\rho(t)$ admits the representation
\begin{equation}\label{eq:rho_as_pushforward}
    \rho(t) = (\Phi_{G(t)})_{\sharp} \rho_0,
\end{equation}
where $G: [0,\infty) \to \SLpm(d,\R)$ solves the closed matrix-valued equation
\begin{equation}\label{eq:mf_govern_dyn}
\dot{G}(t) = \CF[(\Phi_{G(t)})_{\sharp} \rho_0] G(t), \qquad G(0) = I_{d} \, .
\end{equation}
\end{theorem}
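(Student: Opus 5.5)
The plan is to first solve the closed matrix ODE \eqref{eq:mf_govern_dyn}, and then show that the pushforward curve $\mu(t) := (\Phi_{G(t)})_\sharp \rho_0$ is a weak solution of \eqref{eq:mf_oja_flow}. Uniqueness from Theorem \ref{thm:well-posedness_mf} will then identify $\mu$ with $\rho$.

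\textbf{Step 1: the matrix ODE is well posed on $\GL(d,\R)$.} Consider $G \mapsto \CF[(\Phi_G)_\sharp \rho_0]\,G$ on the open set $\GL(d,\R)$. I will show that $G \mapsto \Phi_G(x) = Gx/\|Gx\|$ is locally Lipschitz uniformly in $x \in \BBS^{d-1}$. This holds because $\|Gx\| \ge \|G^{-1}\|_{\op}^{-1}$ is bounded below on compact subsets of $\GL(d,\R)$. Consequently
\[
W_1\big((\Phi_G)_\sharp\rho_0,(\Phi_{G'})_\sharp\rho_0\big) \le \sup_x \|\Phi_G(x)-\Phi_{G'}(x)\|,
\]
and Assumption \ref{asm:well-posedness} makes the vector field locally Lipschitz, so Picard--Lindelöf gives a unique local solution. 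To get global existence in $\GL(d,\R)$, I use Jacobi's formula together with \eqref{eq:traceless_asm}:
\[
\tfrac{d}{dt}\det G = \tr{\CF[\cdot]}\det G = 0.
\]
Hence $\det G(t) = 1$, and $G$ stays in $\SLpm(d,\R)$. The growth bound $\|\dot G\|_{\op} \le C_{\CF}\|G\|_{\op}$ gives $\|G(t)\|_{\op} \le e^{C_{\CF} t}$. Since $\det G = 1$, the adjugate formula bounds $\|G^{-1}\|_{\op} = \|\adj(G)\|_{\op}$ by $C\|G\|_{\op}^{d-1}$. So $G$ remains in a compact subset of $\GL(d,\R)$ on bounded time intervals and cannot blow up or degenerate in finite time.

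\textbf{Step 2: NL trajectories are characteristics.} Set $M(t) := \CF[\mu(t)]$, where $\mu(t) := (\Phi_{G(t)})_\sharp\rho_0$. For fixed $x$, define $y(t) := G(t)x$ and $z(t) := y/\|y\|$. Then $\dot y = M y$, and a direct computation gives
\[
\dot z = \frac{My}{\|y\|} - \frac{y\,(y^\top M y)}{\|y\|^3} = (I_d - zz^\top) M z = b(z,\mu(t)).
\]
Thus $t \mapsto \Phi_{G(t)}(x)$ solves the characteristic equation \eqref{eq:chara_flow_in_proof} driven by the curve $\mu$. By uniqueness of that ODE, $\Phi_{G(t)} = \Psi^\mu_t$, and hence $\mu = \CT\mu$. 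Continuity of $\mu$ in $W_1$ follows from Step 1.

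\textbf{Step 3: identification with $\rho$.} Since $\mu$ is a fixed point of $\CT$, the computation in Step 2 of the proof of Theorem \ref{thm:well-posedness_mf} shows that $\mu$ is a weak solution. Uniqueness of weak solutions (equivalently, of the fixed point on each $[0,T]$) then gives $\mu = \rho$, which is \eqref{eq:rho_as_pushforward}.

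I expect the main obstacle to be Step 1's global existence, specifically controlling $G^{-1}$. That control comes from the determinant identity, and this is exactly where the tracelessness assumption \eqref{eq:traceless_asm} enters. The rest consists of the routine differentiation of $Gx/\|Gx\|$ and bookkeeping of the local Lipschitz constants.
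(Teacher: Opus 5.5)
Your proof is correct, but it runs in the opposite direction from the paper's.

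The paper never treats \eqref{eq:mf_govern_dyn} as a nonlinear ODE. It takes the solution $\rho$ already constructed in Theorem \ref{thm:well-posedness_mf}, freezes it in the coefficient, and solves the \emph{linear} equation $\dot G = \CF[\rho(t)]G$, $G(0)=I_d$, whose global existence and uniqueness are immediate. Jacobi's formula together with tracelessness then gives $\det G \equiv 1$. The same differentiation you carry out in Step 2 shows that $t\mapsto\Phi_{G(t)}(x)$ solves the characteristic equation driven by $\rho$. Uniqueness of characteristics for this fixed velocity field gives $\Phi_{G(t)}=\Psi_t$, and the characteristic representation $\rho(t)=(\Psi_t)_\sharp\rho_0$ yields \eqref{eq:rho_as_pushforward}. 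The closed equation follows by substituting this representation back into the linear ODE.

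That route bypasses your Step 1 entirely. It needs no local Lipschitz estimate for $G\mapsto(\Phi_G)_\sharp\rho_0$ in $W_1$ and no a priori bounds to keep $G$ away from the boundary of $\GL(d,\R)$. It also needs no appeal to uniqueness of the fixed point of $\CT$.

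What your route buys in exchange:
\begin{itemize}
\item You get a standalone well-posedness result for \eqref{eq:mf_govern_dyn} as an autonomous system on $\GL(d,\R)$.
\item You show the representation holds for \emph{every} solution of the closed equation. The paper only exhibits one solution and does not address uniqueness for the closed equation. Uniqueness does follow at once from your Steps 2--3: any solution $G'$ gives a curve equal to $\rho$, so it satisfies the same linear ODE as $G$.
\item Your Steps 1--2 also give an independent existence proof for the mean-field equation, with only uniqueness borrowed from Theorem \ref{thm:well-posedness_mf}.
\end{itemize}

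One small correction to your closing remark: tracelessness is not what prevents $G$ from degenerating. Liouville's formula gives $\det G(t)=\exp\bigl(\int_0^t \tr{\CF[\mu(s)]}\,ds\bigr)\ge e^{-dC_{\CF}t}$, and with $\|G^{-1}\|_{\op}=\|\adj(G)\|_{\op}/|\det G|$ this already controls $G^{-1}$ on bounded intervals. Tracelessness only normalizes $G(t)$ into $\SLpm(d,\R)$, which is also its sole role in the paper.
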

Theorem \ref{thm:PI_mf_sys} shows that the characteristic flow associated with the mean-field solution $\rho$ of \eqref{eq:mf_oja_flow} is explicitly represented by a normalized linear transformation.
More precisely, the characteristic flow $\Psi_t$ introduced in Theorem \ref{thm:well-posedness_mf} satisfies
\[
\Psi_t = \Phi_{G(t)},
\] 
where $G(t)$ solves a finite-dimensional matrix equation \eqref{eq:mf_govern_dyn}.

The normalized linear transformation representation \eqref{eq:rho_as_pushforward} also provides an explicit description of the representative-particle dynamics discussed in Section \ref{subsec:well-posedness_mf}. 
Indeed, if $X_0$ is a random variable following the distribution $\rho_0$, then 
\begin{equation}
    X(t) = \Phi_{G(t)} (X_0) = \frac{G(t) X_0}{\|G(t) X_0\|}, \qquad t \geq 0
\end{equation}
solves the nonlinear characteristic equation \eqref{eq:representative_particle_ode}.
In other words, the trajectory of a representative particle is obtained by applying the time-dependent normalized linear transformation $\Phi_{G(t)}$ to its initial state.


\begin{remark}[Relation to existing finite-particle results]\label{rem:existing_finite_sys_result}
The normalized linear transformation representation of the finite-particle dynamics was established in \cite{lohe2020on,lohe2025exactA,lohe2025exactB}.
Theorem \ref{thm:PI_mf_sys} recovers the earlier results when specialized to empirical measures.
 
In particular, let $\rho_n(0) = \frac{1}{n} \sum_{k=1}^n \delta_{x_{k, 0}}$ be the initial empirical distribution, and let $\rho_n(t)$ denote the empirical distribution associated with the solution of the finite-particle system \eqref{eq:MP_oja_flow}.
By the consistency between the finite-particle and mean-field descriptions established in Section \ref{subsec:well-posedness_mf}, Theorem \ref{thm:PI_mf_sys} applies to $\rho_n$ and gives
\begin{equation*}
    \rho_n(t) = (\Phi_{G_n(t)})_{\sharp} \rho_n(0) = \frac{1}{n} \sum_{k=1}^n \delta_{\Phi_{G_n(t)} (x_{k, 0})},
\end{equation*}
where $G_n(t)$ satisfies the matrix-valued ODE
\begin{equation}\label{eq:finite_sys_gov_ode}
\dot{G}_n(t) = \CF [(\Phi_{G_n(t)})_{\sharp} \rho_n(0)] G_n(t), \qquad G_n(0) = I_d \, .
\end{equation}
It follows that the unique solution of the finite-particle system \eqref{eq:MP_oja_flow} with initial configuration $x_k(0) = x_{k, 0}$ admits the representation
\begin{equation}\label{eq:PI_finite_sys}
    x_k(t) = \Phi_{G_n(t)} (x_{k, 0}), \qquad k=1,2, \dots, n \, .
\end{equation}
Thus, as shown in \cite{lohe2020on,lohe2025exactA,lohe2025exactB}, the finite-particle dynamics \eqref{eq:MP_oja_flow} are fully determined by the initial configuration and a single ODE whose dimension is independent of the number of particles $n$.
However, it is worth emphasizing that the representation \eqref{eq:PI_finite_sys}, together with equation \eqref{eq:finite_sys_gov_ode}, does not necessarily lead to a reduction in computational cost compared to the original finite-particle system \eqref{eq:MP_oja_flow}.
Indeed, evaluating the right-hand side of \eqref{eq:finite_sys_gov_ode} still requires aggregating information over all particles, so the computational cost generally remains dependent on $n$.
\end{remark}

\begin{remark}[Parallel with the Watanabe--Strogatz theory for the Kuramoto model]\label{rem:connect_to_WS}
The representation of the multi-particle Oja flow presented in Theorem \ref{thm:PI_mf_sys} closely parallels the Watanabe--Strogatz description of the Kuramoto model \cite{watanabe1994constants,marvel2009identical,Lohe_2018,Lipton_2021}.
In that setting, the Kuramoto dynamics are shown to admit a representation through a time-dependent Möbius transformation acting on the initial data, with the transformation governed by finitely many variables.
In the present setting, the analogous representation is given by the normalized linear transformation $\Phi_{G(t)}$, with $G(t)$ evolving according to the matrix equation \eqref{eq:mf_govern_dyn}.

This type of representation is commonly referred to as \emph{partial integrability}.
More specifically, for fixed initial data, the evolution of the system is encoded by a single time-dependent transformation involving only finitely many collective variables \cite{watanabe1993integrability,watanabe1994constants,pikovsky2008partially,Lohe_2018,lohe2025exactA,park2022watanabe}.
\end{remark}

The proof of Theorem \ref{thm:PI_mf_sys} adapts the algebraic calculation from \cite{lohe2020on,lohe2025exactA} to the mean-field setting.
The key observation, which also provides the intuition behind the representation \eqref{eq:rho_as_pushforward}, is that the vector field $b$ of the mean-field equation, defined in \eqref{eq:vf}, coincides with the infinitesimal motion on the sphere generated by the normalized linear transformations; see \eqref{eq:infi_motion} in the proof below.

\begin{proof}[Proof of Theorem \ref{thm:PI_mf_sys}]
Let $\rho$ be the unique global weak solution to \eqref{eq:mf_oja_flow} with initial condition $\rho_0$, and consider the matrix-valued ODE
\begin{equation}\label{eq:Gt_ODE_in_proof}
\dot{G}(t) = \CF \left[ \rho(t) \right] G(t), \qquad G(0) = I_d \, .
\end{equation}
Since $\rho \in C([0, \infty); \CP(\BBS^{d-1}))$ and $\CF$ is Lipschitz continuous under Assumption \ref{asm:well-posedness}, the map $t \mapsto \CF[\rho(t)]$ is continuous.
The equation \eqref{eq:Gt_ODE_in_proof} for $G$ is therefore a linear matrix-valued ODE with continuous coefficients.
The standard existence and uniqueness theory for linear systems \cite{teschl2012ordinary} shows that it admits a unique global solution $G \in C^1 ([0, \infty); \R^{d\times d})$.

We next show that $G(t)$ remains in $\SLpm(d,\R)$.
Suppressing the explicit dependence on $t$ for notational simplicity, Jacobi's formula in adjugate form and \eqref{eq:Gt_ODE_in_proof} give
\begin{equation*}
\frac{d}{dt} \det G = \tr{\adj(G) \dot{G}} = \tr{\adj(G) \CF[\rho] G} = \tr{\CF[\rho] G \, \adj(G)} = \tr{\CF[\rho]} \det G = 0 \, .
\end{equation*}
Here, we used the identity $G\, \adj(G) = (\det G) I_d$ and the tracelessness condition \eqref{eq:traceless_asm}.
Consequently, $\det G(t) = \det G(0) = 1$ for every $t \geq 0$, and therefore $G(t) \in \SLpm(d,\R)$ for all $t \geq 0$.

It remains to establish the representation \eqref{eq:rho_as_pushforward}.
Fix $x \in \BBS^{d-1}$.
Since $G(t)$ is invertible, the normalized linear transformation $\Phi_{G(t)}(x) = \frac{G(t)x}{\|G(t)x\|}$ is well-defined.
Suppressing again the explicit dependence on $t$, differentiating with respect to time gives
\begin{equation}\label{eq:infi_motion}
\begin{aligned}
\frac{d}{dt} \Phi_{G}(x) &= \frac{d}{dt} \frac{Gx}{\|Gx\|} = \frac{\dot{G} x}{\|G x\|} - \frac{G x}{\|G x\|^3} \left\langle \dot{G} x, G x \right\rangle = \frac{\CF[\rho] G x}{\|G x\|} - \frac{G  x}{\|G x\|^3} \left\langle \CF[\rho] G x, G x \right\rangle\\[4pt]
&= \CF[\rho] \Phi_G(x) - \Phi_G(x)\left\langle \CF[\rho] \Phi_G(x), \Phi_G(x) \right\rangle = \left( I_d - \Phi_G(x) \Phi_G(x)^{\top} \right) \CF[\rho] \Phi_G(x) = b(\Phi_G(x), \rho) \, .
\end{aligned}
\end{equation}
Moreover, $\Phi_{G(0)} (x) = \Phi_{I_d} (x) = x$.
Therefore, the family of maps $\Phi_{G(t)}$ satisfies the characteristic equation associated with $\rho$.
By uniqueness of the characteristic flow, $\Phi_{G(t)} = \Psi_t$, where $\Psi_t$ denotes the characteristic flow from Theorem \ref{thm:well-posedness_mf}.
The characteristic representation of the mean-field solution therefore gives
\begin{equation*}
    \rho(t) = (\Psi_t)_{\sharp} \rho_0 = (\Phi_{G(t)})_{\sharp} \rho_0 \, .
\end{equation*}
This proves \eqref{eq:rho_as_pushforward}.
Substituting this representation into \eqref{eq:Gt_ODE_in_proof} gives the closed matrix-valued ODE \eqref{eq:mf_govern_dyn}.
\end{proof}

\subsection{Reduction via Invariant Parametric Families}\label{subsec:inv_param_fam}
Theorem \ref{thm:PI_mf_sys} shows that the solution of the mean-field dynamics \eqref{eq:mf_oja_flow} admits the representation $\rho(t) = (\Phi_{G(t)})_{\sharp} \rho_0$, where $G(t)$ solves the matrix equation \eqref{eq:mf_govern_dyn}.
However, for a general initial distribution $\rho_0 \in \CP(\BBS^{d-1})$, this representation may not provide an explicit description of the solution, since the dependence of $(\Phi_{G(t)})_{\sharp} \rho_0$ on $G(t)$ can be highly intricate.
For the same reason, the analysis and numerical simulation of the matrix equation \eqref{eq:mf_govern_dyn} may also remain challenging, as its right-hand side depends on the evolving pushforward distribution $(\Phi_{G(t)})_{\sharp} \rho_0$.

Motivated by these difficulties, we further explore the structure of the mean-field dynamics \eqref{eq:mf_oja_flow} by identifying \emph{parametric families} of distributions that are \emph{invariant} under the dynamics.
More precisely, we seek families of distributions determined by finitely many parameters with the property that any solution $\rho(t)$ whose initial data belongs to the family remains within the same family for all $t \geq 0$.
We refer to any parametric family with this property as an \emph{invariant parametric family} under the mean-field dynamics \eqref{eq:mf_oja_flow}.

For initial data $\rho_0$ belonging to an invariant parametric family, the evolution of $\rho(t)$ can instead be described directly through the corresponding evolving family parameters, rather than through the pushforward representation $\rho(t) = (\Phi_{G(t)})_{\sharp} \rho_0$.
In this sense, invariant parametric families provide a further reduction in the description of the mean-field dynamics and may thereby facilitate their analytical and numerical study.

In Section \ref{subsubsec:construction_inv_param_fam}, we present a general approach for identifying invariant parametric families on $\BBS^{d-1}$.
Using this approach, we show that projected elliptical distributions \cite{mardia2009directional,stute2005projected,brett1998projected,kato2004further,tsagris2025circular}
and their finite mixtures provide a broad class of examples.
In Section \ref{subsubsec:ACG_manifold_inv}, we examine the important special case of the \emph{angular central Gaussian} family \cite{tyler1987statistical}.
We show that this family is invariant under the mean-field dynamics and derive the reduced evolution equation satisfied by its parameter matrix.

\subsubsection{Identifying Invariant Parametric Families}\label{subsubsec:construction_inv_param_fam}
We begin with a definition that will be useful for identifying the invariant parametric families of interest.
\begin{definition}[Closure under a transformation]
Let $h: \CX \to \CX$ be a measurable transformation, and let $P \subseteq \CP(\CX)$ be a family of probability distributions.
We say that $P$ is \emph{closed under} $h$ if $h_{\sharp} p \in P$ for every $p \in P$.
Equivalently, whenever $X$ is a random variable with $\Law(X) \in P$, one has $\Law(h(X)) \in P$.
\end{definition}
Together with Theorem \ref{thm:PI_mf_sys}, this notion gives a simple sufficient condition for a parametric family to be invariant under the mean-field dynamics \eqref{eq:mf_oja_flow}.
\begin{lemma}\label{lem:closure_implies_inv}
Let $P \subseteq \CP(\BBS^{d-1})$ be a parametric family that is closed under every normalized linear transformation $\Phi_{G}$, with $G \in \GL(d, \R)$.
Then $P$ is invariant under the mean-field equation \eqref{eq:mf_oja_flow}.
More precisely, if $\rho_0 \in P$, the corresponding solution satisfies $\rho(t) \in P$ for every $t \geq 0$.
\end{lemma}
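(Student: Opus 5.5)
The proof is a direct consequence of Theorem \ref{thm:PI_mf_sys}, so the plan is simply to combine the normalized linear transformation representation with the closure hypothesis.

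\textbf{Step 1: reduce to the traceless case.} As explained at the start of Section \ref{sec:low-dim_str}, replacing $\CF$ by its trace-free part $\CF - \frac{\tr{\CF}}{d} I_d$ leaves the vector field $b$, and hence the solution $\rho(t)$, unchanged. This replacement also preserves Assumption \ref{asm:well-posedness}: the Lipschitz constant grows by at most a factor depending only on $d$, since $|\tr{A}| \leq d \|A\|_{\op}$. So we may assume \eqref{eq:traceless_asm} and Theorem \ref{thm:PI_mf_sys} applies.

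\textbf{Step 2: apply the representation.} Fix $\rho_0 \in P$ and let $\rho$ be the unique weak solution from Theorem \ref{thm:well-posedness_mf}. Theorem \ref{thm:PI_mf_sys} gives $\rho(t) = (\Phi_{G(t)})_{\sharp}\rho_0$ for all $t \geq 0$, with $G(t) \in \SLpm(d,\R) \subseteq \GL(d,\R)$.

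\textbf{Step 3: conclude.} For each fixed $t$, $\Phi_{G(t)}$ is a normalized linear transformation with $G(t) \in \GL(d,\R)$. Since $P$ is closed under every such map, $(\Phi_{G(t)})_{\sharp}\rho_0 \in P$, that is, $\rho(t) \in P$ for every $t \geq 0$.

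There is no real obstacle. The only point needing care is Step 1: the invariance statement is made for a general $\CF$, while Theorem \ref{thm:PI_mf_sys} assumes tracelessness. This is handled by noting that the solution is unchanged by the trace shift, so the standing reduction of Section \ref{sec:low-dim_str} applies verbatim. Measurability of $\Phi_G$, needed for the pushforward to make sense, is immediate because $\Phi_G$ is continuous.
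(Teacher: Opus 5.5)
Your proposal is correct and follows the paper's proof: write $\rho(t) = (\Phi_{G(t)})_{\sharp}\rho_0$ with $G(t) \in \SLpm(d,\R) \subseteq \GL(d,\R)$ via Theorem \ref{thm:PI_mf_sys}, then apply the closure hypothesis. Your Step 1 simply spells out the tracelessness reduction, including the preserved Lipschitz bound, which the paper takes as a standing assumption throughout Section \ref{sec:low-dim_str}.
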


\begin{proof}
By Theorem \ref{thm:PI_mf_sys}, the solution of \eqref{eq:mf_oja_flow} admits the characteristic representation $\rho(t) = (\Phi_{G(t)})_{\sharp} \rho_0$, where $G(t) \in \SLpm(d, \R) \subseteq \GL(d,\R)$ for every $t \geq 0$.
Since $P$ is closed under every normalized linear transformation $\Phi_G$, it follows that $\rho(t) = (\Phi_{G(t)})_{\sharp} \rho_0 \in P$ for every $t \geq 0$, which proves the claim.
\end{proof}
Lemma \ref{lem:closure_implies_inv} reduces the identification of invariant parametric families on $\BBS^{d-1}$ to finding families that are closed under normalized linear transformations.
To find such families, we first introduce some notation. 
For $G \in \GL(d,\R)$, we denote the linear transformation associated with $G$ by
\begin{equation}\label{eq:linear_transformation}
\Lambda_G: \R^d \to \R^d, \qquad \Lambda_G (x) := G x \, .
\end{equation}
We also define the normalization map by
\begin{equation}\label{eq:normalization_map}
  \Pi: \R^d \setminus \{0\} \to \BBS^{d-1}, \qquad  \Pi (x) := \frac{x}{\|x\|} \, .
\end{equation}
The normalized linear transformation \eqref{eq:NL_transform} can then be expressed as $\Phi_G = \Pi \circ \Lambda_G$ on $\BBS^{d-1}$.

Following the standard construction of directional distributions by normalizing Euclidean random vectors \cite{mardia2009directional,pewsey2021recent}, we introduce the following definition.
\begin{definition}[Projected distributions]\label{def:projected_family}
Let $Q \subseteq \CP(\R^d)$ be a family of probability distributions satisfying $q(\{0\}) = 0$ for every $q \in Q$.
The associated family of projected distributions on $\BBS^{d-1}$ is defined by
\begin{equation}\label{eq:projected_family}
    P_Q := \left\{ \Pi_{\sharp} q : q \in Q \right\} \, .
\end{equation}
\end{definition}
By construction, a distribution $p \in P_Q$ can be interpreted as the distribution of the \emph{direction} of a Euclidean random vector whose law belongs to $Q$.
Here, the condition $q(\{0\}) = 0$ ensures that the normalization map $\Pi$ is defined $q$-almost surely for every $q \in Q$.

The relationship between Euclidean random vectors and their directions leads to a useful observation: closure under linear transformations in $\R^d$ is inherited, after projection, as  closure under normalized linear transformations on $\BBS^{d-1}$.
Combining this observation with Lemma \ref{lem:closure_implies_inv}, we obtain the following sufficient condition for finding the invariant parametric families of interest.

\begin{proposition}[Identification of invariant parametric families]\label{prop:construct_inv}
Let $Q \subseteq \CP(\R^d)$ be a parametric family satisfying $q(\{0\}) = 0$ for every $q \in Q$.
Suppose that $Q$ is closed under every linear transformation $\Lambda_G$, with $G \in \GL(d, \R)$.
Then the associated projected family $P_Q$, defined in \eqref{eq:projected_family}, is invariant under the mean-field dynamics \eqref{eq:mf_oja_flow}.
\end{proposition}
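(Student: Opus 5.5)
By Lemma \ref{lem:closure_implies_inv}, it is enough to show that $P_Q$ is closed under every normalized linear transformation $\Phi_G$ with $G \in \GL(d,\R)$. The invariance of $P_Q$ under the mean-field equation \eqref{eq:mf_oja_flow} then follows at once.

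The key step is an identity between maps on $\R^d\setminus\{0\}$:
\begin{equation*}
\Phi_G \circ \Pi = \Pi \circ \Lambda_G .
\end{equation*}
To check it, let $x \neq 0$. Since $G$ is invertible, $Gx \neq 0$. By linearity and positive homogeneity of the norm,
\begin{equation*}
\Phi_G(\Pi(x)) = \frac{G x/\|x\|}{\|G x\|/\|x\|} = \frac{Gx}{\|Gx\|} = \Pi(\Lambda_G(x)).
\end{equation*}
This is the same as the scale-invariance property of $\Phi_G$ for positive scalars.

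Now take $p \in P_Q$ and write $p = \Pi_\sharp q$ with $q \in Q$. By the hypothesis on $Q$, $q' := (\Lambda_G)_\sharp q$ belongs to $Q$, so in particular $q'(\{0\})=0$. Both $\Lambda_G^{-1}(\{0\}) = \{0\}$ and $q(\{0\}) = 0$ hold, so the identity above holds $q$-almost everywhere. Pushforwards compose, which gives
\begin{equation*}
(\Phi_G)_\sharp p = (\Phi_G\circ\Pi)_\sharp q = (\Pi\circ\Lambda_G)_\sharp q = \Pi_\sharp q' \in P_Q .
\end{equation*}
Hence $P_Q$ is closed under every $\Phi_G$, and Lemma \ref{lem:closure_implies_inv} completes the argument.

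The only point that needs care is measure-theoretic. $\Pi$ is defined only off the origin, so we must confirm that the compositions are defined almost surely and are measurable. Both are immediate: $\Pi$ and $\Lambda_G$ are continuous, and the exceptional set is the $q$-null set $\{0\}$. I do not expect a substantive obstacle here.
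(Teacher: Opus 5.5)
Your proposal is correct and follows essentially the same route as the paper: reduce to closure under $\Phi_G$ via Lemma~\ref{lem:closure_implies_inv}, then combine the identity $\Phi_G \circ \Pi = \Pi \circ \Lambda_G$ on $\R^d\setminus\{0\}$ with the composition of pushforwards and the closure of $Q$ under $\Lambda_G$. Your explicit check of that identity and your remark that the exceptional set $\{0\}$ is $q$-null are careful additions that the paper's proof leaves implicit.
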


\begin{proof}
By Lemma \ref{lem:closure_implies_inv}, it suffices to show that $P_Q$ is closed under every normalized linear transformation $\Phi_G$.

Let $p = \Pi_{\sharp} q \in P_Q$ for some $q \in Q$, and fix $G \in \GL(d, \R)$.
Using the composition rule for pushforward measures and the identity $\Phi_G \circ \Pi = \Pi \circ \Lambda_G$ on $\R^d \setminus \{0\}$, we obtain
\begin{equation*}
(\Phi_G)_{\sharp} p = (\Phi_G)_{\sharp} (\Pi_{\sharp} q) = (\Phi_G \circ \Pi)_{\sharp} q = (\Pi \circ \Lambda_G)_{\sharp} q = \Pi_{\sharp} ((\Lambda_G)_{\sharp} q) \, .
\end{equation*}
Since $Q$ is closed under $\Lambda_G$, we have $(\Lambda_G)_{\sharp} q \in Q$.
It follows that $(\Phi_G)_{\sharp} p \in P_Q$.
This shows that $P_Q$ is closed under normalized linear transformations.
\end{proof}

Proposition \ref{prop:construct_inv} provides a simple yet effective procedure for finding invariant parametric families on $\BBS^{d-1}$: one first identifies a parametric family $Q$ on $\R^d$ that is closed under linear transformations and then considers its associated projected family $P_Q$, as defined in \eqref{eq:projected_family}.

\paragraph{Parametric families closed under linear transformations.}
Many important parametric families of distributions on $\R^d$ are closed under linear transformations.
A broad class of examples is provided by elliptical distributions \cite{cambanis1981on,fang1990symmetric}, which include, among others, multivariate Gaussian distributions, multivariate $t$-distributions, and symmetric multivariate Laplace distributions.
We refer to \cite{cambanis1981on,fang1990symmetric} for a comprehensive treatment of elliptical distributions and their transformation properties.

\begin{remark}
The closure property under linear transformations extends naturally to finite mixtures of elliptical distributions.
As a simple illustration, consider a two-component Gaussian mixture 
\begin{equation*}
    Y \sim \alpha \CN (\mu_1, \Sigma_1) + (1-\alpha) \CN (\mu_2, \Sigma_2), \qquad \alpha \in [0,1] \, .
\end{equation*}
Then, for any $G \in \GL(d, \R)$,
\begin{equation*}
   \Lambda_G(Y) \sim \alpha \CN (G\mu_1, G\Sigma_1 G^{\top}) + (1-\alpha) \CN (G\mu_2, G\Sigma_2 G^{\top}) \, .
\end{equation*}
Thus, the family of two-component Gaussian mixtures is closed under linear transformations, with both the number of mixture components and their associated weights preserved.
More generally, the same observation applies to finite mixtures of any family of distributions that is itself closed under linear transformations.
\end{remark}

\paragraph{Invariant parametric families.}
By Proposition \ref{prop:construct_inv}, projecting the Euclidean parametric families identified above onto the unit sphere gives a broad class of parametric families on $\BBS^{d-1}$ that are invariant under the mean-field dynamics \eqref{eq:mf_oja_flow}.
An important example is the class of projected elliptical distributions \cite{mardia2009directional,stute2005projected}, which includes projected Gaussian distributions \cite{pukkila1988pattern,brett1998projected,HernandezStumpfhauser2017TheGP}, projected $t$-distributions \cite{kato2004further}, and projected Cauchy distributions \cite{tsagris2025circular,alzeley2026generalized}.
This construction also extends to projected families arising from finite mixtures of elliptical distributions.

Identifying such invariant parametric families provides additional structural information about the mean-field equation \eqref{eq:mf_oja_flow}.
In particular, it shows that any solution $\rho(t)$ initialized within one of these families remains in the same prescribed class for all time, thereby restricting the set of distributions that may arise along the evolution.
Moreover, the mean-field dynamics can then be studied through the evolution of the corresponding family parameters.

In the next subsection, we illustrate this reduction using one such invariant family, namely the angular central Gaussian family \cite{tyler1987statistical}.


\subsubsection{Reduction within the Angular Central Gaussian Family}\label{subsubsec:ACG_manifold_inv}
We begin by recalling the definition of the angular central Gaussian distribution \cite{tyler1987statistical}.

Following the standard construction of directional distributions as in Definition \ref{def:projected_family}, the angular central Gaussian family is obtained by projecting centered Gaussian distributions on $\R^d$ onto the unit sphere $\BBS^{d-1}$.
More precisely, let $\CN(0,\Sigma)$ denote the centered Gaussian distribution with covariance matrix $\Sigma$, where $\Sigma \in \R^{d\times d}$ is symmetric and positive definite. 
If
\begin{equation*}
    Y \sim \CN(0,\Sigma) \qquad \text{and} \qquad X = \Pi(Y) = \frac{Y}{\|Y\|},
\end{equation*}
then $X$ is said to follow the angular central Gaussian distribution with parameter $\Sigma$, denoted by $X \sim \ACG(\Sigma)$.
Equivalently,
\begin{equation*}
    \ACG(\Sigma) = \Pi_{\sharp} \CN (0, \Sigma) \, .
\end{equation*}
The angular central Gaussian distribution $\ACG(\Sigma)$ admits a density with respect to the surface measure on $\BBS^{d-1}$ given by
\begin{equation}\label{eq:ACG_density}
    p_{\Sigma} (x) := \frac{\Gamma(d/2)}{2\pi^{d/2}} \frac{1}{\sqrt{\det \Sigma}} (x^{\top} \Sigma^{-1} x)^{-d/2}, \qquad x \in \BBS^{d-1} \, .
\end{equation}
With a slight abuse of notation, we henceforth identify $\ACG(\Sigma)$ with its density $p_{\Sigma}$ and use the two notations interchangeably to denote the angular central Gaussian distribution.

The parameter $\Sigma$ is identifiable only up to multiplication by a positive scalar.
Indeed, for every $c > 0$, $p_{c\Sigma} (x) = p_{\Sigma} (x)$ for every $x \in \BBS^{d-1}$.
Therefore, we remove this ambiguity by restricting the parameter space of $\ACG(\Sigma)$ to
\begin{equation*}
    \SPD_1 (d) := \left\{ \Sigma \in \R^{d\times d}: \Sigma = \Sigma^{\top}, \Sigma \succ 0, \det \Sigma = 1 \right\} \, .
\end{equation*}
A visual illustration of the angular central Gaussian family is provided in Figure \ref{fig:ACG}.
We refer to \cite{tyler1987statistical} for a more detailed treatment of this family and its basic properties.

We are now ready to show that the angular central Gaussian family is invariant under the mean-field equation \eqref{eq:mf_oja_flow}.

\begin{figure}[!htb]
    \centering
    \includegraphics[width=0.85\linewidth]{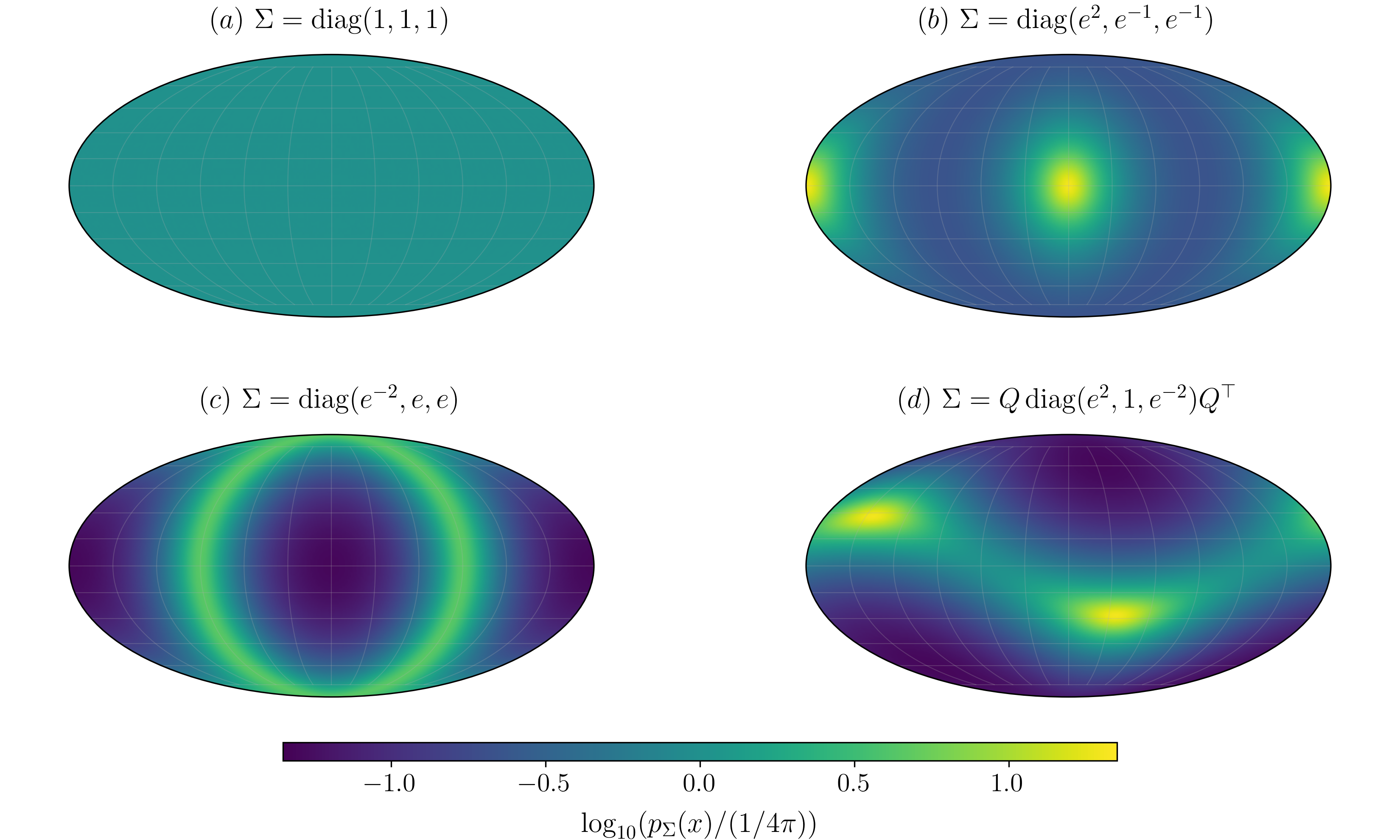}
    \caption{Mollweide-projection visualizations of angular central Gaussian densities on the unit sphere $\BBS^2$ for different choices of the parameter matrix $\Sigma$. 
    In panel (d), $Q$ denotes a rotation matrix.}
    \label{fig:ACG}
\end{figure}

\begin{theorem}[Reduction within the angular central Gaussian family]\label{thm:ACG_invariant}
Let $\rho(t)$ be the solution of the mean-field dynamics \eqref{eq:mf_oja_flow} with initial condition $\rho(0) = \rho_0$, and suppose that the interaction kernel $\CF$ satisfies Assumption \ref{asm:well-posedness} and the tracelessness condition \eqref{eq:traceless_asm}.
Assume that $\rho_0$ belongs to the angular central Gaussian family, i.e., $\rho_0 = \ACG(\Sigma_0)$ for some $\Sigma_0 \in \SPD_1(d)$. 
Then, for all $t \geq 0$,
\begin{equation}
    \rho(t) = \ACG (\Sigma(t)),
\end{equation}
where $\Sigma: [0, \infty) \to \SPD_1(d)$ solves
\begin{equation}\label{eq:mf_goven_dyn_acg}
    \dot{\Sigma}(t) = \CF[p_{\Sigma(t)}] \Sigma(t) + \Sigma(t) \CF[p_{\Sigma(t)}]^{\top}, \qquad \Sigma(0) = \Sigma_0 \, .
\end{equation}
\end{theorem}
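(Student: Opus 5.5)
We combine Theorem \ref{thm:PI_mf_sys} with the pushforward identity used in Proposition \ref{prop:construct_inv}. Let $G(t)\in\SLpm(d,\R)$ be the solution of \eqref{eq:mf_govern_dyn} with $\rho_0=\ACG(\Sigma_0)$, so that $\rho(t)=(\Phi_{G(t)})_\sharp\rho_0$. Since $\Phi_G\circ\Pi=\Pi\circ\Lambda_G$ and $(\Lambda_G)_\sharp\CN(0,\Sigma_0)=\CN(0,G\Sigma_0G^\top)$, we get
\[
\rho(t)=\Pi_\sharp \CN\big(0,G(t)\Sigma_0G(t)^\top\big)=\ACG\big(G(t)\Sigma_0G(t)^\top\big).
\]
We therefore set $\Sigma(t):=G(t)\Sigma_0G(t)^\top$. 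This matrix is symmetric and positive definite because $G(t)$ is invertible. Moreover $\det\Sigma(t)=(\det G(t))^2\det\Sigma_0=1$, so $\Sigma(t)\in\SPD_1(d)$, and $\Sigma(0)=\Sigma_0$.

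Next we differentiate $\Sigma(t)$. From $\dot G=\CF[\rho(t)]G$,
\[
\dot\Sigma=\dot G\Sigma_0G^\top+G\Sigma_0\dot G^\top=\CF[\rho(t)]\Sigma+\Sigma\,\CF[\rho(t)]^\top,
\]
and substituting $\rho(t)=p_{\Sigma(t)}$ gives exactly \eqref{eq:mf_goven_dyn_acg}. Hence $\Sigma(t)$ is a global $C^1$ solution of the reduced equation with values in $\SPD_1(d)$, and $\rho(t)=\ACG(\Sigma(t))$.

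The remaining point, and the only real obstacle, is that ``$\Sigma$ solves \eqref{eq:mf_goven_dyn_acg}'' should mean that this constructed curve is the solution, i.e.\ that the ODE has a unique solution in $\SPD_1(d)$. The plan is to show that $\Sigma\mapsto\CF[p_\Sigma]$ is locally Lipschitz on $\SPD_1(d)$, which gives local uniqueness by Picard--Lindelöf. By Assumption \ref{asm:well-posedness} it suffices to bound $W_1(p_\Sigma,p_{\Sigma'})$ by a constant times $\|\Sigma-\Sigma'\|$ locally. For this, write $\Sigma=A A^\top$ with $A=\Sigma^{1/2}$ and $\Sigma'=A'A'^\top$, and couple through a common $Z\sim\CN(0,I_d)$:
\[
W_1(p_\Sigma,p_{\Sigma'})\le \E\left\|\Pi(AZ)-\Pi(A'Z)\right\|\le 2\,\|A^{-1}\|_{\op}\,\|A-A'\|_{\op}.
\]
Here we used the elementary bound $\|\Pi(u)-\Pi(v)\|\le 2\|u-v\|/\|u\|$ together with $\|AZ\|\ge \|Z\|/\|A^{-1}\|_{\op}$. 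The matrix square root is locally Lipschitz on positive definite matrices, so this gives the required local Lipschitz bound.

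Alternatively, uniqueness can be obtained without the Lipschitz estimate. Any solution $\tilde\Sigma$ of the reduced equation produces a measure curve $\tilde\rho(t)=\ACG(\tilde\Sigma(t))$. Solving $\dot{\tilde G}=\CF[\tilde\rho]\tilde G$ gives $\tilde G\Sigma_0\tilde G^\top=\tilde\Sigma$, because both sides satisfy the same linear ODE once $t\mapsto\CF[\tilde\rho(t)]$ is fixed. The computation \eqref{eq:infi_motion} then shows that $\tilde\rho$ is a weak solution of \eqref{eq:mf_oja_flow}. By Theorem \ref{thm:well-posedness_mf}, $\tilde\rho=\rho$, so $\ACG(\tilde\Sigma(t))=\ACG(\Sigma(t))$, and identifiability of the parameter within $\SPD_1(d)$ yields $\tilde\Sigma=\Sigma$.
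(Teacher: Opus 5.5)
Your proof matches the paper's: both obtain $\rho(t)=(\Phi_{G(t)})_\sharp\ACG(\Sigma_0)$ from Theorem~\ref{thm:PI_mf_sys}, use $\Phi_G\circ\Pi=\Pi\circ\Lambda_G$ to get $\rho(t)=\ACG(G(t)\Sigma_0G(t)^\top)$, check $\det\Sigma(t)=1$, and differentiate $\Sigma=G\Sigma_0G^\top$. Your extra uniqueness argument for the reduced equation is not in the paper, which only exhibits $\Sigma(t)$ as a solution, but both of your routes are correct. These are the coupling bound $W_1(p_\Sigma,p_{\Sigma'})\le 2\|A^{-1}\|_{\op}\|A-A'\|_{\op}$, and uniqueness of weak solutions plus identifiability in $\SPD_1(d)$; the second route still needs continuity of $\Sigma\mapsto p_\Sigma$ in $W_1$, which your coupling bound supplies. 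Uniqueness is what justifies solving the reduced ODE on its own, as the paper does in Section~\ref{sec:examples}.
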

Theorem \ref{thm:ACG_invariant} shows that, for angular central Gaussian initial data, the mean-field distribution $\rho(t)$ remains within the same family for all $t\geq 0$.
Moreover, within this family, the general representation \eqref{eq:rho_as_pushforward} established in Theorem \ref{thm:PI_mf_sys} takes the reduced form $\rho(t) = p_{\Sigma(t)}$, where the density $p_{\Sigma(t)}$ is given by \eqref{eq:ACG_density} and the parameter $\Sigma(t)$ evolves according to \eqref{eq:mf_goven_dyn_acg}.
We refer to this finite-dimensional representation of the mean-field dynamics within the angular central Gaussian family as the \emph{ACG reduction}.

The ACG reduction allows the mean-field evolution \eqref{eq:mf_oja_flow} to be studied through the finite-dimensional dynamics of $\Sigma(t)$, which can be simpler to analyze than the evolution on the space of probability measures.

A concrete illustration of this simplification is provided by the characterization of stationary states in the following proposition, whose proof is deferred to Appendix \ref{app:ACG}.

\begin{proposition}\label{prop:stationary_states}
Suppose that the assumptions of Theorem \ref{thm:ACG_invariant} hold and, in addition, that the interaction kernel $\CF$ is symmetric. 
That is, $\CF[\varrho] = \CF[\varrho]^{\top}$ for every $\varrho \in \CP(\BBS^{d-1})$. 
Then the set of stationary distributions of the original mean-field equation \eqref{eq:mf_oja_flow} is 
\begin{equation}\label{eq:stat_state_mf}
\CS := \left\{ \rho_* \in \CP(\BBS^{d-1}) : (I_d - x x^{\top}) \CF[\rho_*] x = 0, \;\; \rho_*\text{-a.e.} \right\} \, .
\end{equation}
Within the angular central Gaussian family, the set of stationary distributions is
\begin{equation}\label{eq:stat_state_ACG}
\CS_{\ACG} := \left\{ p_{\Sigma_*} : \CF[p_{\Sigma_*}] = 0, \, \Sigma_* \in \SPD_1(d) \right\} \, .
\end{equation}
In particular, $\CS_{\ACG} \subseteq \CS$.
\end{proposition}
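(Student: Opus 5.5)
The plan is to handle the general characterization of $\CS$ first and then specialize it to the angular central Gaussian family, using only Theorem \ref{thm:well-posedness_mf} (the weak formulation and uniqueness) and the explicit density \eqref{eq:ACG_density}.

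\textbf{Step 1: if $b(\dummy,\rho_*)=0$ $\rho_*$-a.e., then $\rho_*$ is stationary.} Take the constant curve $\rho(t)\equiv\rho_*$. The integrand on the right-hand side of the weak formulation is $\nabla_{\BBS^{d-1}}\varphi(x)^\top b(x,\rho_*)$, which vanishes $\rho_*$-a.e. Hence the constant curve is a weak solution with initial datum $\rho_*$. By the uniqueness part of Theorem \ref{thm:well-posedness_mf}, it is \emph{the} solution, so $\rho_*$ is stationary.

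\textbf{Step 2: every stationary $\rho_*$ lies in $\CS$.} This is the step where symmetry of $\CF$ is needed, and I expect it to be the main obstacle. I will use a Lyapunov-type test function. Suppose $\rho(t)\equiv\rho_*$ solves \eqref{eq:mf_oja_flow}, and set $F:=\CF[\rho_*]$, a fixed symmetric matrix. Take $\varphi(x)=x^\top F x$, which is $C^1$ on $\BBS^{d-1}$. Its spherical gradient is
\[
\nabla_{\BBS^{d-1}}\varphi(x)=2(I_d-xx^\top)Fx=2\,b(x,\rho_*),
\]
where the factor $2$ and the identification with $b$ use $F=F^\top$. Plugging this into the weak formulation gives, for every $t$,
\[
0=\int\varphi\,d\rho_*-\int\varphi\,d\rho_*=2t\int_{\BBS^{d-1}}\|b(x,\rho_*)\|^2\,d\rho_*(x).
\]
Therefore $b(\dummy,\rho_*)=0$ $\rho_*$-a.e. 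Together with Step 1, this shows that the set of stationary distributions is exactly $\CS$.

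\textbf{Step 3: specialization to the ACG family.} Fix $\Sigma_*\in\SPD_1(d)$ and write $F:=\CF[p_{\Sigma_*}]$.
\begin{itemize}
\item If $F=0$, then $b(\dummy,p_{\Sigma_*})\equiv0$, so $p_{\Sigma_*}\in\CS$. By Step 1 it is stationary.
\item Conversely, suppose $p_{\Sigma_*}$ is stationary. By Step 2, $(I_d-xx^\top)Fx=0$ for $p_{\Sigma_*}$-a.e. $x$. The density \eqref{eq:ACG_density} is continuous and strictly positive on the sphere. Hence this identity holds on a dense set, and by continuity for every $x\in\BBS^{d-1}$. So every unit vector is an eigenvector of $F$, which forces $F=cI_d$. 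The tracelessness condition \eqref{eq:traceless_asm} then gives $c=0$, that is, $\CF[p_{\Sigma_*}]=0$.
\end{itemize}
This shows that the stationary distributions within the family are exactly $\CS_{\ACG}$.

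\textbf{Consistency check and the inclusion.} As a cross-check with \eqref{eq:mf_goven_dyn_acg}: stationarity of $p_{\Sigma_*}$ means, by identifiability on $\SPD_1(d)$, that $F\Sigma_*+\Sigma_*F=0$. Diagonalizing $\Sigma_*=UDU^\top$ turns this into $\widetilde F_{ij}(d_i+d_j)=0$ with $d_i>0$, where $\widetilde F=U^\top F U$. This again gives $F=0$. Finally, $\CS_{\ACG}\subseteq\CS$ is immediate, since $F=0$ makes $b$ vanish identically.
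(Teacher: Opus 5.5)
Your proof is correct. Steps 1--2 are the paper's argument for $\CS$: the same quadratic test function up to a factor. The paper simply asserts the converse direction, whereas you justify it by invoking uniqueness of the weak solution, which is a welcome bit of care.

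For the ACG part, your main argument takes a genuinely different route. You upgrade the $p_{\Sigma_*}$-a.e.\ condition to an everywhere condition, using that $p_{\Sigma_*}$ is continuous and strictly positive. From this you get $\CF[p_{\Sigma_*}]=cI_d$, and tracelessness then forces $c=0$.

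The paper instead works only with the reduced dynamics \eqref{eq:mf_goven_dyn_acg}. It reads stationarity of $p_{\Sigma_*}$ as $\dot\Sigma=0$, i.e.\ $\CA_*\Sigma_*+\Sigma_*\CA_*=0$. Diagonalizing $\Sigma_*$ gives $(\lambda_i+\lambda_j)(\WCA_*)_{ij}=0$, which is exactly your ``consistency check.''

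What your route buys:
\begin{itemize}
\item It bypasses Theorem \ref{thm:ACG_invariant} and the identifiability of $\Sigma$ on $\SPD_1(d)$.
\item It applies verbatim to any stationary measure whose support is all of $\BBS^{d-1}$, not just ACG laws.
\item It shows exactly where tracelessness is needed: without it you only get $\CF[p_{\Sigma_*}]\in\R I_d$.
\end{itemize}

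The paper's route instead turns the question into finite-dimensional linear algebra on $\Sigma$, which is the simplification the proposition is meant to showcase. There tracelessness enters only implicitly, through the normalization $\det\Sigma=1$ that makes stationarity of the law equivalent to $\dot\Sigma=0$.
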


Proposition \ref{prop:stationary_states} provides a simple illustration of how the ACG reduction can simplify the analysis of stationary states.
In particular, the condition $(I_d - xx^{\top}) \CF[\rho_*] x = 0$, $\rho_*\text{-a.e.}$, does not in general imply $\CF[\rho_*] = 0$, since $\rho_*$ may be supported entirely on eigenvectors of a nonzero matrix $\CF[\rho_*]$.  
Within the angular central Gaussian family, however, stationary distributions are fully characterized by a simpler finite-dimensional condition: $\CF[p_{\Sigma_*}] = 0$ with $\Sigma_* \in \SPD_1(d)$.
We revisit this comparison of stationary states in Section \ref{sec:examples} for two representative interaction kernels listed in Table \ref{tab:interaction-kernels}, where the ACG reduction leads to more explicit characterizations of the corresponding stationary states.

The ACG reduction can also facilitate the study of the long-time behavior of the system, particularly when the interaction kernel $\CF$ possesses additional structure.
For example, when $\CF$ depends on the distribution through quantities such as its moments, $\CF[p_{\Sigma(t)}]$, where $p_{\Sigma(t)}$ is the ACG density given in \eqref{eq:ACG_density}, may admit a tractable expression.
Such an expression can make the resulting finite-dimensional system for $\Sigma(t)$ more amenable to both analytical and numerical study than the original mean-field dynamics.
Section \ref{sec:examples} provides concrete examples of such reduced systems, which offer a tractable setting for further investigation of their long-time behavior.
By contrast, the long-time behavior of the corresponding mean-field dynamics may be more difficult to characterize directly.


The invariant parametric family viewpoint underlying the ACG reduction is in the same spirit as the Ott--Antonsen ansatz for the Kuramoto model \cite{kuramoto1975international,kuramoto1984chemical}, where the dynamics likewise preserve a parametric family of probability distributions.
We discuss this connection in more detail in the following remark.

\begin{remark}[Parallel with the Ott--Antonsen ansatz for the Kuramoto model]\label{rem:connect_to_OA}
The invariance of the angular central Gaussian family under the multi-particle Oja flow is analogous to the Ott--Antonsen ansatz for the Kuramoto model \cite{ott2008low,ott2009long}.
In particular, in dimension $d=2$, the wrapped Cauchy family plays the analogous role for the classical Kuramoto model \cite{ott2008low,ott2009long,marvel2009identical}, forming the well-known Ott--Antonsen invariant manifold.
For the higher-dimensional Kuramoto model on the unit sphere, the associated invariant family is the spherical Cauchy family \cite{chandra2019complexity,Lipton_2021,kato2020some}.

A further parallel arises from the transformation structure underlying these families. 
The spherical Cauchy family can be obtained by pushing forward the uniform distribution on the unit sphere through Möbius transformations \cite{chandra2019complexity,Lipton_2021,kato2020some}.
Analogously, the angular central Gaussian family can be obtained by pushing forward the uniform distribution through normalized linear transformations, as shown in Proposition \ref{prop:M_Unif_ACG_manifold}.

The present setting, however, reveals a broader structural picture.
In Section \ref{subsubsec:construction_inv_param_fam}, we develop a systematic approach for identifying invariant parametric families for the multi-particle Oja flow by exploiting closure under normalized linear transformations.
This leads to a broad class of invariant parametric families, of which the angular central Gaussian family is one example.

We note that, on the circle, other parametric families closed under Möbius transformations have also been identified, such as the Kato--Jones family \cite{kato2010family,jacimovic2025learning}. 
These examples, together with the parallel discussed above, naturally raise the question of whether there is a systematic approach for identifying parametric families that are closed under Möbius transformations and hence invariant under the corresponding Kuramoto dynamics.
\end{remark}

We conclude this section by presenting the proof of Theorem \ref{thm:ACG_invariant}.

\begin{proof}[Proof of Theorem \ref{thm:ACG_invariant}]
Suppose that $\rho_0 = \ACG(\Sigma_0)$ for some $\Sigma_0 \in \SPD_1(d)$.
By Theorem \ref{thm:PI_mf_sys}, the mean-field distribution admits the characteristic representation
\begin{equation*}
    \rho(t) = (\Phi_{G(t)})_{\sharp} \ACG(\Sigma_0),
\end{equation*}
where $G: [0, \infty) \to \SLpm(d,\R)$ satisfies
\begin{equation*}
    \Dot{G}(t) = \CF[(\Phi_{G(t)})_{\sharp} \ACG(\Sigma_0)] G(t), \qquad G(0) = I_d \, .
\end{equation*}

Using the composition rule for pushforward measures and the identity $\Phi_G \circ \Pi = \Pi \circ \Lambda_G$ on $\R^d\setminus \{0\}$ for any $G \in \GL(d,\R)$, we can compute that
\begin{equation*}
\begin{aligned}
\rho(t) &= (\Phi_{G(t)})_{\sharp} \ACG(\Sigma_0) = (\Phi_{G(t)})_{\sharp} \left( \Pi_{\sharp} \CN (0, \Sigma_0) \right) = \left( \Phi_{G(t)} \circ \Pi \right)_{\sharp} \CN (0, \Sigma_0)\\[4pt]
&= \left( \Pi \circ \Lambda_{G(t)} \right)_{\sharp} \CN (0, \Sigma_0) = \Pi_{\sharp} \CN (0, G(t)\Sigma_0 G(t)^{\top}) = \ACG(\Sigma(t)),
\end{aligned}
\end{equation*}
where we denote 
\[
\Sigma(t) := G(t) \Sigma_0 G(t)^{\top} \, .
\]
Since $G(t) \in \SLpm(d, \R)$ and $\Sigma_0 \in \SPD_1(d)$, the matrix $\Sigma(t)$ is symmetric positive definite and satisfies $\det \Sigma(t) = \det G(t) \det \Sigma_0 \det G(t)^{\top} = 1$, and thus $\Sigma(t) \in \SPD_1(d)$ for all $t \geq 0$.
Finally, differentiating $\Sigma(t)$ and using the equation for $G(t)$, we obtain
\begin{equation*}
\begin{aligned}
\Dot{\Sigma}(t) &= \Dot{G}(t) \Sigma_0 G(t)^{\top} +  G(t) \Sigma_0 \Dot{G}(t)^{\top} = \CF[\ACG(\Sigma(t))] G(t) \Sigma_0 G(t)^{\top} + G(t)\Sigma_0 G(t)^{\top} \CF[\ACG(\Sigma(t))]^{\top}\\[4pt]
& = \CF[p_{\Sigma(t)}] \Sigma(t) + \Sigma(t) \CF[p_{\Sigma(t)}]^{\top} \, .
\end{aligned}
\end{equation*}
The initial condition is $\Sigma(0) = I_d \Sigma_0 I_d^{\top} = \Sigma_0$.
This completes the proof.
\end{proof}

\section{Example Systems}\label{sec:examples}
In this section, we illustrate the low-dimensional descriptions of the multi-particle Oja flow developed in Section \ref{sec:low-dim_str} through several example systems.
We begin with a distribution-independent interaction kernel $\CF$ as a basic illustrative example, and then consider two selected distribution-dependent kernels listed in Table \ref{tab:interaction-kernels}.

\paragraph{Example 1.}
As a consistency check, we first consider the distribution-independent interaction kernel 
\begin{equation}
    \CF[\varrho] = B, \qquad \varrho \in \CP(\BBS^{d-1}),
\end{equation}
where $B \in \R^{d\times d}$ is fixed and satisfies $\tr{B} = 0$.
In this case, the multi-particle Oja flow  \eqref{eq:MP_oja_flow} reduces to $n$ independent copies of the classical Oja flow \cite{oja1982simplified}.
It therefore suffices to consider
\begin{equation*}
    \dot{X} = \left( I_d - X X^{\top} \right) B X \, .
\end{equation*}
We first note that Theorem \ref{thm:PI_mf_sys} recovers the explicit solution of the classical Oja flow.
Indeed, the solution admits the representation
\begin{equation*}
    X(t) = \frac{G(t)X(0)}{\|G(t) X(0)\|},
\end{equation*}
where $G(t)$ solves \eqref{eq:mf_govern_dyn}, which in this case reduces to 
\begin{equation*}
    \dot{G} = B G, \qquad G(0) = I_d \, .
\end{equation*}
Solving the ODE gives $G(t) = e^{tB}$, and then we obtain the explicit formula established in \cite{yan1994global},
\begin{equation*}
    X(t) = \frac{e^{tB} X(0)}{\|e^{tB} X(0)\|} \, .
\end{equation*}
When $B$ is symmetric and has a unique largest eigenvalue, this representation gives the convergence behavior of the classical Oja flow.
Let $v_1$ be a unit eigenvector associated with the largest eigenvalue of $B$.
Then, for any initial condition satisfying $X(0)^{\top} v_1 \neq  0$, we have
\[
X(t) \rightarrow \sgn {X(0)^{\top} v_1} v_1, \qquad \text{as } t \rightarrow \infty \, .
\]
We refer to  \cite{oja1982simplified,oja1985stochastic,yan1994global} for the corresponding convergence results.

Beyond this trajectory-wise characterization, the ACG reduction established in Theorem \ref{thm:ACG_invariant} provides a finite-dimensional description of the corresponding distribution.
In particular, if $X(0) \sim \rho_0 = \ACG(\Sigma_0)$ for some $\Sigma_0 \in \SPD_1(d)$, then
\[
X(t) \sim \rho(t) = \ACG(\Sigma(t))
\] 
for all $t \geq 0$, where $\Sigma(t)$ solves 
\begin{equation}
    \dot{\Sigma}(t) = B \Sigma(t) + \Sigma(t) B^{\top}, \qquad \Sigma(0) = \Sigma_0 \, .
\end{equation}
This equation admits the explicit solution
\begin{equation}\label{eq:ACG_Sigma_classical_Oja_flow}
    \Sigma(t) = e^{tB} \Sigma_0 e^{tB^{\top}} \, .
\end{equation}
Under the same assumption that $B$ is symmetric with a unique largest eigenvalue and that $\Sigma_0$ is positive definite, we further have
\begin{equation*}
 \frac{\Sigma(t)}{\tr{\Sigma(t)}} \rightarrow v_1 v_1^{\top}, \qquad \text{as } t \rightarrow \infty;
\end{equation*}
see Lemma \ref{lem:asm_behavior_Sigma_t} for the derivation.
Since $\tr{\Sigma(t)} > 0$ for all $t\geq 0$, the positive scale invariance of the angular central Gaussian distribution gives
\begin{equation*}
    \rho(t) = \ACG(\Sigma(t)) = \ACG \left(\frac{\Sigma(t)}{\tr{\Sigma(t)}}\right) \, .
\end{equation*}
Consequently,
\begin{equation*}
 \rho(t) \rightharpoonup \frac{1}{2} \delta_{v_1} + \frac{1}{2} \delta_{-v_1}, \qquad \text{as } t\rightarrow \infty \, .
\end{equation*}
Thus, the ACG reduction provides a distribution-level counterpart to the classical trajectory-wise convergence result: for angular central Gaussian initial data, the evolving distribution asymptotically concentrates on the antipodal pair $\pm v_1$.

Although these observations are not new, they provide a useful consistency check on the results established in Section \ref{sec:low-dim_str}.
We next turn to some distribution-dependent interaction kernels listed in Table \ref{tab:interaction-kernels}.
For these kernels, the equation \eqref{eq:mf_govern_dyn} for $G(t)$ remains coupled to the evolving distribution and, in general, no longer admits an explicit solution of the type available in the present example.
In this setting, the reductions via invariant parametric families developed in Section \ref{subsec:inv_param_fam} can provide a more concrete description of the mean-field evolution.

\paragraph{Example 2.} 
We consider the second-moment interaction kernel \cite{liu2005axial,lohe2020on,kim2021cluster,zhang2022opinion,lohe2025exactA} in the following traceless form:\footnote{The term $\frac{1}{d}I_d$ is subtracted only to ensure that $\CF[\varrho]$ is traceless; this does not change the associated dynamics.}
\begin{equation}\label{eq:CF_second_moment}
    \CF[\varrho] = \int_{\BBS^{d-1}} x x^{\top} d\varrho(x) - \frac{1}{d} I_d, \qquad  \varrho \in \CP(\BBS^{d-1}) \, . 
\end{equation}
In this case, the multi-particle Oja flow \eqref{eq:MP_oja_flow} takes the explicit form
\begin{equation}\label{eq:MP_oja_flow_second_moment_kernel}
    \dot{x}_k = \left( I_d - x_k x_k^{\top} \right) \frac{1}{n} \sum_{j=1}^n (x_j x_j^{\top}) x_k, \qquad k = 1,2, \dots, n \, .
\end{equation}
At the mean-field level, let $\rho(t)$ denote the solution of the continuity equation \eqref{eq:mf_oja_flow} associated with the interaction kernel $\CF$ defined in \eqref{eq:CF_second_moment}.

Applying the ACG reduction established in Theorem \ref{thm:ACG_invariant}, if the initial data $\rho_0 = \ACG(\Sigma_0)$ for some $\Sigma_0 \in \SPD_1(d)$, 
then
\[
\rho(t) = \ACG(\Sigma(t))
\]
for all $t \geq 0$, where $\Sigma(t)$ solves 
\begin{equation}\label{eq:ACG_param_dyn_second_moment}
\dot{\Sigma}(t) = M(\Sigma(t)) \Sigma(t) + \Sigma(t)  M(\Sigma(t)) - \frac{2}{d} \Sigma(t), \qquad \Sigma(0) = \Sigma_0 \, .
\end{equation}
Here, for $\Sigma \in \SPD_1(d)$, we denote by $M(\Sigma)$ the second moment matrix of the angular central Gaussian distribution $\ACG(\Sigma)$:
\begin{equation}\label{eq:ACG_second_moment}
M(\Sigma) := \int_{\BBS^{d-1}} x x^{\top} dp_{\Sigma}(x)  \, .
\end{equation}
Accordingly, $\CF[p_{\Sigma}] = M(\Sigma) - \frac{1}{d} I_d$.
Moreover, $M(\Sigma)$ admits the one-dimensional integral representation \cite{ospald2021modeling}:
\begin{equation}\label{eq:ACG_second_moment_main}
    M(\Sigma) = \frac{1}{2} \int_0^{\infty} \frac{\Sigma (I_d + s \Sigma)^{-1}}{\sqrt{\det (I_d+s\Sigma)}} ds \, .
\end{equation}

The reduced equation \eqref{eq:ACG_param_dyn_second_moment} provides a finite-dimensional description that fully determines the mean-field distribution through $\rho(t) = \ACG(\Sigma(t))$, and can thereby simplify both analytical and computational studies of the mean-field dynamics.

As a first illustration, we compare the stationary distributions within the ACG family with those of the original mean-field equation.
For the second-moment interaction kernel $\CF$ defined in \eqref{eq:CF_second_moment}, Proposition \ref{prop:stationary_states} shows that the stationary distributions within the ACG family are given by
\begin{equation*}
    \CS_{\ACG} = \left\{ \ACG(\Sigma_*) : M(\Sigma_*) = \frac{1}{d} I_d, \;\; \Sigma_* \in \SPD_1(d) \right\} \, .
\end{equation*}
Using the integral representation of $M(\Sigma)$ given in \eqref{eq:ACG_second_moment_main}, we show in Lemma \ref{lem:stat_dis_example_2} that
\begin{equation*}
\CS_{\ACG} = \left\{ \ACG (I_d) \right\} \, .
\end{equation*}
Namely, the uniform distribution on $\BBS^{d-1}$ is the unique stationary distribution within the ACG family.

The original mean-field equation \eqref{eq:mf_oja_flow}, however, admits additional stationary distributions.
Indeed, Proposition \ref{prop:stationary_states} shows that a probability distribution $\rho_* \in \CP(\BBS^{d-1})$ is stationary for \eqref{eq:mf_oja_flow} with the second-moment interaction kernel $\CF$ if and only if
\begin{equation*}
    (I_d - x x^{\top}) \CF[\rho_*] x = 0, \qquad \rho_*\text{-a.e.}
\end{equation*}
For example, for any $v \in \BBS^{d-1}$, the distribution $\rho_* = \frac{1}{2} \left( \delta_{v} + \delta_{-v} \right)$ satisfies the above condition and is therefore stationary.
To see this, for $x = \pm v$, \eqref{eq:CF_second_moment} gives
\begin{equation*}
    \left( I_d - x x^{\top} \right) \CF[\rho_*] x = \left( I_d - x x^{\top} \right) \left( 1 - \frac{1}{d} \right)x = 0 \, .
\end{equation*}
Therefore, the uniform distribution is the unique stationary distribution within the ACG family, whereas the original mean-field equation possesses other stationary distributions outside this family. 
This illustrates how restricting the dynamics to the ACG family narrows the stationary-state problem to a smaller class that can be characterized directly through the finite-dimensional parameter $\Sigma$.

The finite-dimensional dynamics \eqref{eq:ACG_param_dyn_second_moment} of $\Sigma(t)$ also provide a direct way to numerically simulate the mean-field evolution.
Rather than simulating the interacting particle system \eqref{eq:MP_oja_flow_second_moment_kernel} with a large number of particles and approximating $\rho(t)$ by the corresponding empirical distribution, one can directly solve \eqref{eq:ACG_param_dyn_second_moment} without introducing a particle approximation of the mean-field distribution.

In Figure \ref{fig:second_moment_sigma}, we illustrate this approach in dimension $d=3$ by solving \eqref{eq:ACG_param_dyn_second_moment} and sampling from the resulting distributions $\ACG(\Sigma(t))$ at several time points.
We compare these distributions with the empirical distributions obtained by directly simulating the particle system \eqref{eq:MP_oja_flow_second_moment_kernel}, shown in Figure \ref{fig:oja-second-moment}.
The close agreement between the two simulations illustrates the consistency between the large-$n$ finite-particle evolution and the mean-field evolution described by the ACG reduction. 

\begin{figure}[!htb]
    \centering
    \includegraphics[width=1.0\linewidth]{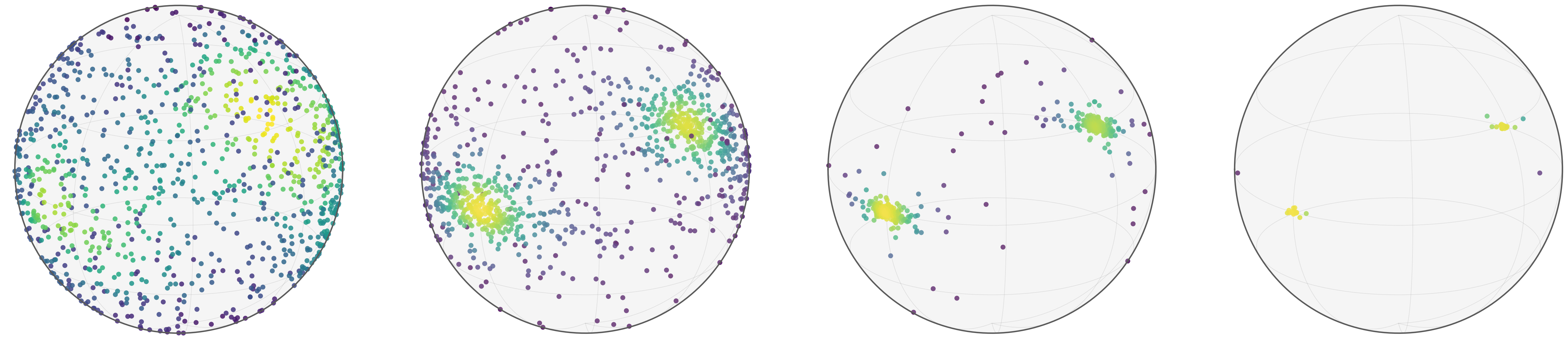}
    \caption{Evolution of the mean-field distribution $\rho(t) = \ACG(\Sigma(t))$ in dimension $d=3$, where $\Sigma(t)$ solves \eqref{eq:ACG_param_dyn_second_moment}.
    The four snapshots visualize the evolving distribution on $\BBS^2$ by displaying samples drawn from $\ACG(\Sigma(t))$ at successive time points.
    }
    \label{fig:second_moment_sigma}
\end{figure}

The reduced dynamics \eqref{eq:ACG_param_dyn_second_moment} also provide a natural setting for investigating long-time behavior of the mean-field system within the ACG family.
In the following remark, we briefly review existing results on the long-time behavior of the finite-particle system \eqref{eq:MP_oja_flow_second_moment_kernel} and discuss how the ACG reduction provides a framework for analysis at the mean-field level.

\begin{remark}[Analysis of long-time behavior]\label{rem:long-time_behavior_example_2}
The long-time behavior of the finite-particle system \eqref{eq:MP_oja_flow_second_moment_kernel} has been studied theoretically in previous works \cite{kim2021cluster,zhang2022opinion}.
In particular, under suitable assumptions imposing a \emph{two-cluster} structure on the initial particle positions, the system has been shown to converge asymptotically to bipartite consensus \cite{kim2021cluster}. 
That is, each particle converges to one of a pair of antipodal points.
The ACG reduction developed in the present paper provides a complementary approach for investigating this collective behavior at the mean-field level.
In particular, the reduced equation \eqref{eq:ACG_param_dyn_second_moment} provides a direct route to studying the long-time behavior of the mean-field dynamics for angular central Gaussian initial data.

It has also been shown that, under suitable assumptions imposing a \emph{one-cluster} structure on the initial particle configuration, the finite-particle system \eqref{eq:MP_oja_flow_second_moment_kernel} converges asymptotically to complete consensus \cite{kim2021cluster}.
That is, all particles concentrate on a single point.
Such behavior cannot be captured within the angular central Gaussian family, since every distribution in this family is antipodally symmetric.
This limitation, however, is specific to the choice of invariant family rather than to the reduction framework developed in Section \ref{subsec:inv_param_fam}. 
In particular, one may instead consider asymmetric invariant parametric families of the multi-particle Oja flow, such as projected Gaussian distributions with nonzero mean \cite{pukkila1988pattern,brett1998projected,HernandezStumpfhauser2017TheGP}.

A detailed investigation of the reduced dynamics associated with different invariant parametric families, including their long-time behavior, is left for future work.
\end{remark}

\begin{remark}[Special case: dimension $d=2$]
In dimension $d=2$, writing $x_k = (\cos \theta_k, \sin \theta_k) \in \BBS^1$, the system \eqref{eq:MP_oja_flow_second_moment_kernel} becomes
\begin{equation*}
    \dot{\theta}_k = \frac{1}{2n} \sum_{j=1}^n \sin (2 (\theta_j - \theta_k)), \qquad k=1,\dots, n \, .
\end{equation*}
That is, the system coincides with the Kuramoto--Daido model with pure second-harmonic coupling \cite{daido1992order,generic1994daido}.
A standard approach to studying this model is to introduce the double-angle transformation $\phi_k = 2\theta_k$ \cite{skardal2011cluster,chen2019low}, under which the system becomes the classical first-harmonic Kuramoto model \cite{kuramoto1975international,kuramoto1984chemical}.
In this way, the Watanabe--Strogatz transformation \cite{watanabe1994constants,marvel2009identical} and the Ott--Antonsen ansatz \cite{ott2008low,ott2009long} can be applied.

This correspondence also connects the ACG reduction to the Ott--Antonsen ansatz in this special case.
Under the same double-angle map, the angular central Gaussian family on $\BBS^1$ corresponds to the wrapped Cauchy family \cite{Kent1988maximum,ciobotaru2018mean}, which is the invariant family associated with the Ott--Antonsen ansatz for the classical Kuramoto model.
Thus, in dimension $d=2$, the ACG reduction for the model \eqref{eq:MP_oja_flow_second_moment_kernel} corresponds, under the double-angle transformation, to the Ott--Antonsen reduction for the classical Kuramoto model.
\end{remark}

\paragraph{Example 3.}
We next consider the cross-product interaction kernel in dimension $d=3$ \cite{lohe2025exactA}, written in the following traceless form:
\begin{equation}\label{eq:CF_three_body}
\CF[\varrho] = -\int_{\BBS^2} \int_{\BBS^2} \left( (x \times y) (x \times y)^{\top} - \frac{1}{3} \|x \times y\|^2 I_3 \right) d\varrho(x) d\varrho(y), \qquad \varrho \in \CP(\BBS^{2}),
\end{equation}
where $\times$ denotes the cross product in $\R^3$.
In this case, the multi-particle Oja flow \eqref{eq:MP_oja_flow} becomes
\begin{equation}\label{eq:MP_oja_flow_three_body_kernel}
\dot{x}_k = -\left( I_3 - x_k x_k^{\top} \right) \frac{1}{n^2} \sum_{i,j=1}^n \left( (x_i \times x_j) (x_i \times x_j)^{\top} \right) x_k, \qquad k=1,2, \dots, n \, .
\end{equation}
At the mean-field level, let $\rho(t)$ again denote the solution of the associated continuity equation \eqref{eq:mf_oja_flow} with the interaction kernel $\CF$ defined in \eqref{eq:CF_three_body}.

Applying the ACG reduction in Theorem \ref{thm:ACG_invariant}, if the initial data $\rho_0 = \ACG(\Sigma_0)$ for some $\Sigma_0 \in \SPD_1(3)$, then
\begin{equation*}
\rho(t) = \ACG(\Sigma(t))
\end{equation*}
for all $t \geq 0$, where $\Sigma(t)$ solves
\begin{equation}\label{eq:ACG_param_example_three_body}
    \dot{\Sigma}(t) = 4 \left( M(\Sigma(t)) - M(\Sigma(t))^2 - \frac{1}{3} \left( 1 - \tr{M(\Sigma(t))^2} \right) I_3 \right) \Sigma(t), \qquad \Sigma(0) = \Sigma_0 \, .
\end{equation}
Here, $M(\dummy)$ denotes the second moment matrix of the ACG distribution defined in \eqref{eq:ACG_second_moment}.
A detailed derivation of the reduced equation is provided in Lemma \ref{lem:ACG_param_dyn_example_3}.

The relation between the stationary distributions within the ACG family and those of the original mean-field equation is similar to that observed in Example 2. 
In particular, within the ACG family, the uniform distribution $\ACG(I_3)$ is the unique stationary distribution of the reduced dynamics \eqref{eq:ACG_param_example_three_body}.
The original mean-field dynamics \eqref{eq:mf_oja_flow}, however, possess additional stationary distributions outside the ACG family.\footnote{We omit the derivation here, as it follows an argument similar to that used in Example 2.}

As in Example $2$, the mean-field evolution can also be simulated directly through the reduced equation for $\Sigma(t)$.
Figure \ref{fig:three_body_sigma} shows the resulting evolution of $\rho(t) = \ACG(\Sigma(t))$ on $\BBS^2$, visualized using samples drawn at several successive time points.
The agreement with the large-$n$ particle simulation shown in Figure \ref{fig:oja-three-body} again illustrates the consistency between the finite-particle evolution and the mean-field evolution described by the ACG reduction.

\begin{figure}[!htb]
    \centering
    \includegraphics[width=1.0\linewidth]{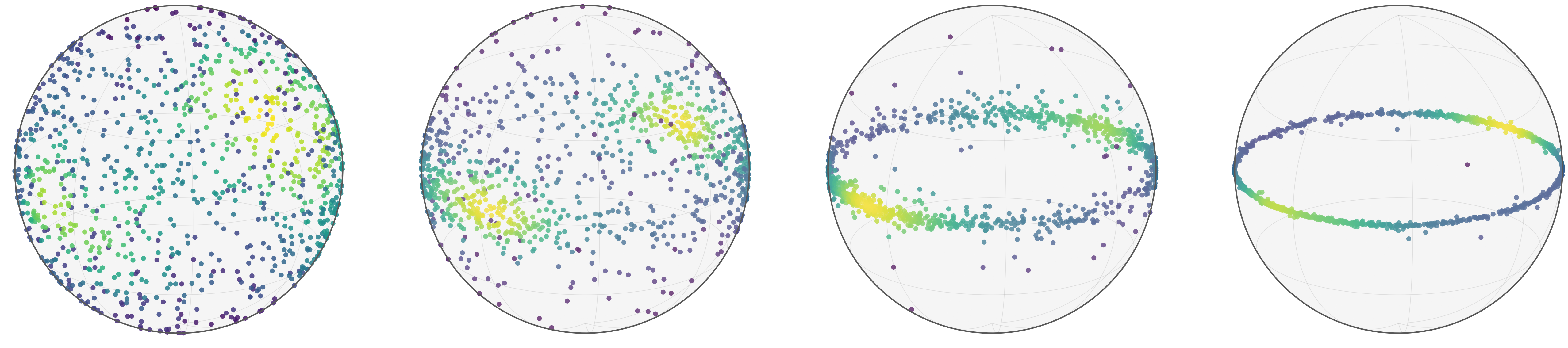}
    \caption{Evolution of the mean-field distribution $\rho(t) = \ACG(\Sigma(t))$ in dimension $d=3$, where $\Sigma(t)$ solves \eqref{eq:ACG_param_example_three_body}.
    The four snapshots visualize the evolving distribution on $\BBS^2$ by displaying samples drawn from $\ACG(\Sigma(t))$ at successive time points.}
    \label{fig:three_body_sigma}
\end{figure}

To the best of our knowledge, a rigorous theoretical characterization of the long-time behavior of the multi-particle Oja flow with the cross-product interaction kernel \eqref{eq:CF_three_body} has not yet been established.
Numerical simulations in \cite{lohe2025exactA} indicate circular synchronization, consistent with the behavior observed above.
As discussed in Remark \ref{rem:long-time_behavior_example_2} for Example 2, the reduced equation \eqref{eq:ACG_param_example_three_body} obtained through the ACG reduction provides a natural route to investigating the qualitative properties and long-time behavior of the associated mean-field dynamics.


\section{Conclusion}\label{sec:conclusion}
In this paper, we systematically study the multi-particle Oja flow \eqref{eq:MP_oja_flow} as a family of interacting particle systems on the unit sphere.
After establishing well-posedness at both the finite-particle and mean-field levels, together with a mean-field convergence result, we explore the low-dimensional structures underlying the dynamics.

In particular, we show that, for an arbitrary initial distribution, the solution of the mean-field equation \eqref{eq:mf_oja_flow} admits a representation via normalized linear transformations, providing an explicit expression for the associated characteristic flow.
Building upon this representation, we further identify a broad class of invariant parametric families for the mean-field dynamics.
When the initial distribution belongs to one of these families, the evolution remains within the same family and can therefore be characterized through the dynamics of the corresponding family parameters.
As a concrete example, we develop this reduction within the angular central Gaussian family and derive the corresponding parameter matrix dynamics.

These results provide a foundation for further investigation of the dynamical properties and collective behavior of multi-particle Oja flows with different choices of the interaction kernel. 
As illustrated by the examples in Section \ref{sec:examples}, reductions via invariant parametric families can be particularly useful when the initial distribution belongs to such a family and the resulting dynamics admit a more tractable description than the original mean-field equation.
In this setting, questions concerning the dynamical behaviors of the mean-field system can be studied through a finite-dimensional dynamical system rather than directly on the space of probability measures. 
Although such a reduction does not capture the mean-field dynamics for arbitrary initial distributions, it provides a useful analytical and computational framework in regimes where direct analysis of the original mean-field equation may be difficult.

More broadly, the general form of the multi-particle Oja flow, together with the low-dimensional structures identified in this paper, provides a flexible framework for exploring interaction kernels motivated by applications beyond those considered here and for analyzing the resulting collective dynamics.

\section*{Acknowledgements}
The work of S. Li was supported by the National Science Foundation under Grant DMS-2511283. 
S. Li would like to sincerely thank Fei Lu, Nicolás García Trillos, and Mauro Maggioni for their helpful discussions and valuable suggestions.

\paragraph{Statement on the use of AI-assisted tools.}
GPT-5.6 Sol was used to assist with proofreading and language editing of the manuscript, as well as with several routine algebraic calculations. 
In particular, it was used to assist with deriving and simplifying the example-specific reduced dynamics presented in Section \ref{sec:examples}, based on the general framework established in Section \ref{sec:low-dim_str}.
It was also used to assist in generating Python scripts for the numerical simulations. 
Following its use, the author independently re-derived and rigorously verified all mathematical statements, proofs, computations, and code. 
The author takes full responsibility for the accuracy, integrity, and originality of the final manuscript.

\bibliographystyle{unsrt}
\bibliography{ref}

\newpage
\appendix
\clearpage
\phantomsection
\addcontentsline{toc}{section}{Appendix}
\addtocontents{toc}{\protect\setcounter{tocdepth}{0}}

\etocdepthtag.toc{appendix}


\section{Proofs and Auxiliary Computations}

\subsection{Proof Details in Section \ref{sec:well-posedness}}\label{app:proof_sec_2}

\begin{proof}[Proof of Lemma \ref{lem:vf_property}]
We first note that, for every $x \in \BBS^{d-1}$, the matrix $\Pperp_x := (I_d - xx^{\top})$ is the orthogonal projection onto the tangent space $\mathrm{T}_{x} \BBS^{d-1}$, and hence $\|\Pperp_x\|_{\op} = 1$.

Recall the vector field $b$ defined in \eqref{eq:vf}.
We first show the uniform boundedness of $b$ for every $x \in \BBS^{d-1}$ and $\varrho \in \CP(\BBS^{d-1})$.
In particular, using the uniform boundedness of $\CF$, we obtain 
\begin{equation*}
\|b(x, \varrho)\| = \left\| \P_x^{\perp} \CF[\varrho] x \right\| \leq \left\| \P_x^{\perp} \right\|_{\op} \left\| \CF[\varrho] \right\|_{\op} \|x\| \leq C_{\CF} \, .
\end{equation*}

We next establish the Lipschitz estimate of $b$. 
Let $x, \Wx \in \BBS^{d-1}$ and $\varrho, \Wvarrho \in \CP(\BBS^{d-1})$, and abbreviate $F := \CF[\varrho]$ and $\WF := \CF[\Wvarrho]$.
By the triangle inequality, we have
\begin{equation*}
\begin{aligned}
\left\| b(x, \varrho) - b(\Wx, \Wvarrho) \right\| &= \left\|\P_x^{\perp} F x - \P_{\Wx}^{\perp} \WF \Wx \right\| \leq \left\| \left(\Pperp_x - \Pperp_{\Wx}\right) Fx \right\| + \left\| \Pperp_{\Wx} \left( F - \WF \right)x \right\| + \left\| \Pperp_{\Wx} \WF \left( x - \Wx \right) \right\|\\[4pt]
&\leq \left\| \Pperp_x - \Pperp_{\Wx} \right\|_{\op} \|F\|_{\op} \|x\| + \left\|\Pperp_{\Wx} \right\|_{\op} \left\| F - \WF \right\|_{\op} \|x\| + \left\| \Pperp_{\Wx} \right\|_{\op} \left\| \WF \right\|_{\op} \left\| x - \Wx \right\|\\[4pt]
&\leq 2 C_{\CF} \left\| x - \Wx \right\| + L_{\CF} W_1 (\varrho, \Wvarrho) + C_{\CF} \left\| x - \Wx \right\| = 3C_{\CF} \left\| x - \Wx \right\| + L_{\CF} W_1 (\varrho, \Wvarrho) \, .
\end{aligned}
\end{equation*}
Here, the last inequality uses the boundedness and Lipschitzness of $\CF$, and $\|\Pperp_{\Wx}\|_{\op} = 1$.
\end{proof}

\subsection{More on the Angular Central Gaussian Family}\label{app:ACG}

\begin{proof}[Proof of Proposition \ref{prop:stationary_states}]
We first characterize the stationary distributions of the original mean-field equation \eqref{eq:mf_oja_flow}.
Let $\rho_*$ be a stationary distribution of \eqref{eq:mf_oja_flow}.
Then, for every test function $\varphi \in C^1(\BBS^{d-1})$, 
\begin{equation*}
\int_{\BBS^{d-1}} \nabla_{\BBS^{d-1}} \varphi (x)^{\top} \left( I_d - x x^{\top} \right) \CF[\rho_*] x d \rho_*(x) = 0 \, .
\end{equation*}
Choose $\varphi(x) = \frac{1}{2} x^{\top} \CF[\rho_*] x$.
Since $\CF[\rho_*]$ is symmetric, $\nabla_{\BBS^{d-1}} \varphi(x) = \left( I_d - x x^{\top} \right)\CF[\rho_*] x$.
Therefore,
\begin{equation*}
\int_{\BBS^{d-1}} \left\| \left( I_d - x x^{\top} \right) \CF[\rho_*] x  \right\|^2 d\rho_*(x) = 0 \, .
\end{equation*}
Since the integrand is nonnegative, it follows that
\begin{equation}\label{eq:stat_cond_mf}
    \left( I_d - x x^{\top} \right) \CF[\rho_*] x = 0 \qquad \rho_*\text{-a.e.} \, .
\end{equation}
Conversely, if a probability measure $\rho_*$ satisfies \eqref{eq:stat_cond_mf}, then the velocity field $b$, as defined in \eqref{eq:vf}, vanishes $\rho_*$-almost everywhere. 
Hence $\rho_*$ is stationary.
Therefore, the set of stationary distributions of the mean-field equation \eqref{eq:mf_oja_flow} is precisely $\CS$, as defined in \eqref{eq:stat_state_mf}.

We next characterize the stationary distributions within the angular central Gaussian family.
Consider the mean-field equation \eqref{eq:mf_oja_flow} with initial data $\rho_0 = p_{\Sigma_0}$ for some $\Sigma_0 \in \SPD_1(d)$.
By Theorem \ref{thm:ACG_invariant}, the corresponding solution remains within the ACG family and satisfies $\rho(t) = p_{\Sigma(t)}$, where $\Sigma(t)$ solves the matrix equation \eqref{eq:mf_goven_dyn_acg}.

Now suppose that $\rho_* = p_{\Sigma_*}$ is a stationary distribution within the ACG family.
For notational simplicity, set $\CA_* := \CF[p_{\Sigma_*}]$.
Since $\CF$ is symmetric, the stationary condition for $\Sigma_*$ becomes 
\begin{equation}\label{eq:stat_cond_Sigma_origin}
0 = \dot{\Sigma}_* = \CA_* \Sigma_* + \Sigma_* \CA_* \, .
\end{equation}
Since $\Sigma_*$ is symmetric positive definite, it admits the eigendecomposition
\[
\Sigma_* = Q D Q^{\top},
\]
where $Q$ is orthogonal and $D = \diag (\lambda_1, \lambda_2, \dots, \lambda_d)$ with $\lambda_i > 0$.
Define
\[
\WCA_* := Q^{\top} \CA_* Q \, .
\]
Multiplying \eqref{eq:stat_cond_Sigma_origin} from the left by $Q^{\top}$ and from the right by $Q$, we obtain
\begin{equation*}
    \WCA_* D + D \WCA_* = 0 \, .
\end{equation*}
Hence, for every $i, j = 1,\dots, d$,
\begin{equation*}
    \left( \lambda_i + \lambda_j \right) (\WCA_{*})_{ij} = 0 \, .
\end{equation*}
Since $\lambda_i + \lambda_j > 0$, we obtain $(\WCA_*)_{ij} = 0$ for all $i, j$.
Therefore, $\WCA_* = 0$, and consequently
\begin{equation*}
    \CA_* = \CF[p_{\Sigma_*}] = 0 \, .
\end{equation*}
Conversely, if $\CF[p_{\Sigma_*}] = 0$, then the vector field $b$ vanishes $p_{\Sigma_*}$-almost everywhere.
Hence $p_{\Sigma_*}$ is stationary.
Therefore, the set of stationary distributions of \eqref{eq:mf_oja_flow} within the ACG family is precisely $\CS_{\ACG}$, as defined in \eqref{eq:stat_state_ACG}.

Finally, the inclusion $\CS_{\ACG} \subseteq \CS$ follows immediately from the definitions of the two sets. 
This completes the proof.
\end{proof}

\begin{proposition}[A characterization of the angular central Gaussian family]\label{prop:M_Unif_ACG_manifold}
Denote the uniform distribution on $\BBS^{d-1}$ by $\Unif(\BBS^{d-1})$, or simply $\Unif$.
Then,
\begin{equation}\label{eq:ACG_manifold}
\{(\Phi_G)_{\sharp} \Unif: G \in \SLpm(d,\R)\} = \left\{\ACG(\Sigma): \Sigma \in \SPD_1(d)   \right\} \, .
\end{equation}
\end{proposition}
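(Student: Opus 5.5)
The plan is to identify the uniform distribution with the projected standard Gaussian and then use the intertwining identity $\Phi_G \circ \Pi = \Pi \circ \Lambda_G$ from the proof of Proposition \ref{prop:construct_inv}.

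\textbf{Step 1 (uniform is ACG).} Record that $\Unif = \Pi_{\sharp}\CN(0,I_d) = \ACG(I_d)$. This follows from rotation invariance of $\CN(0,I_d)$: $\Pi_\sharp\CN(0,I_d)$ is a rotation-invariant probability measure on $\BBS^{d-1}$, hence uniform. Alternatively, it can be read off from the density \eqref{eq:ACG_density} with $\Sigma=I_d$, which is the constant $\Gamma(d/2)/(2\pi^{d/2})$.

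\textbf{Step 2 (inclusion $\subseteq$).} Fix $G\in\SLpm(d,\R)$. Exactly as in the proof of Theorem \ref{thm:ACG_invariant}, compute
\begin{equation*}
(\Phi_G)_\sharp \Unif = (\Phi_G\circ\Pi)_\sharp \CN(0,I_d) = (\Pi\circ\Lambda_G)_\sharp\CN(0,I_d) = \Pi_\sharp \CN(0, GG^{\top}) = \ACG(GG^{\top}).
\end{equation*}
The matrix $GG^\top$ is symmetric positive definite because $G$ is invertible, and $\det(GG^\top)=(\det G)^2=1$. Hence $GG^\top\in\SPD_1(d)$.

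\textbf{Step 3 (inclusion $\supseteq$).} Given $\Sigma\in\SPD_1(d)$, take $G=\Sigma^{1/2}$, the symmetric positive definite square root. Then $\det G=(\det\Sigma)^{1/2}=1$, so $G\in\SLpm(d,\R)$, and $GG^\top=\Sigma$. By Step 2, $(\Phi_G)_\sharp\Unif=\ACG(\Sigma)$. A Cholesky factor would work equally well.

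There is no real obstacle here. The only point needing care is the identification in Step 1: the pushforward $\Pi_\sharp\CN(0,I_d)$ must be exactly the normalized surface measure. The cleanest way to settle this is to rely on uniqueness of the rotation-invariant probability measure on the sphere, or to cite the ACG density formula at $\Sigma=I_d$.
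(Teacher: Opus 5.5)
Your proposal is correct and follows essentially the same route as the paper. Both arguments write $\Unif$ as the projection of $\CN(0,I_d)$, use $\Phi_G(Z/\|Z\|) = GZ/\|GZ\|$ (equivalently $\Phi_G \circ \Pi = \Pi \circ \Lambda_G$) to get $(\Phi_G)_{\sharp}\Unif = \ACG(GG^{\top})$ with $GG^{\top} \in \SPD_1(d)$, and take $G = \Sigma^{1/2}$ for the reverse inclusion. The only differences are cosmetic: you phrase the argument with pushforwards rather than random variables, and you justify that $\Pi_{\sharp}\CN(0,I_d)$ is uniform, which the paper simply asserts.
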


\begin{proof}[Proof of Proposition \ref{prop:M_Unif_ACG_manifold}]
Let $Z \sim \CN (0, I_d)$ be a standard Gaussian random vector in $\R^d$.
Then $U := \frac{Z}{\|Z\|}$ follows the uniform distribution over $\BBS^{d-1}$, namely, $U \sim \Unif(\BBS^{d-1})$.
Fix $G \in \SLpm(d, \R)$, and define
\begin{equation*}
    X = \Phi_G(U) \, .
\end{equation*}
By the definition of the normalized linear transformation \eqref{eq:NL_transform}, $X = \frac{GU}{\|GU\|} = \frac{GZ}{\|GZ\|}$.
Setting $Y := GZ$ and $\Sigma := GG^{\top}$, we have
\begin{equation*}
   Y \sim \CN (0, \Sigma) \qquad \text{and} \qquad  X = \frac{Y}{\|Y\|} \, .
\end{equation*}
A standard polar-coordinate change-of-variables argument shows that the direction of the centered Gaussian random vector $Y$ follows the angular central Gaussian distribution with parameter matrix $\Sigma$; see, for example, \cite{tyler1987statistical}.
Therefore, $X \sim (\Phi_G)_{\sharp} \Unif = \ACG (\Sigma)$.
Moreover, since $G \in \SLpm(d, \R)$, we know that $\Sigma \in \SPD_1(d)$.
This proves 
\begin{equation*}
    \{(\Phi_G)_{\sharp} \Unif: G \in \SLpm(d,\R)\} \subseteq \left\{\ACG(\Sigma): \Sigma \in \SPD_1(d) \right\} \, .
\end{equation*}

Conversely, let $\Sigma \in \SPD_1(d)$.
Since $\Sigma$ is symmetric positive definite, it admits a unique symmetric positive definite square root $\Sigma^{1/2}$.
Set $G := \Sigma^{1/2}$; then $G$ satisfies $GG^{\top} = \Sigma$ and $\det G = \sqrt{\det \Sigma} = 1$.
Therefore, $G \in \SLpm(d, \R)$.
Applying the preceding argument with $G = \Sigma^{1/2}$ gives $\ACG(\Sigma) = (\Phi_{G})_{\sharp} \Unif$.
Therefore,
\begin{equation*}
    \left\{ \ACG(\Sigma) : \Sigma \in \SPD_1(d) \right\} \subseteq \{(\Phi_G)_{\sharp} \Unif: G \in \SLpm(d,\R)\} \, .
\end{equation*}
Combining the two inclusions proves the result.
\end{proof}

\subsection{Auxiliary Computations in Section \ref{sec:examples}}\label{app:examples_aux}

\begin{lemma}[Asymptotic behavior of $\Sigma(t)$ in Example 1]\label{lem:asm_behavior_Sigma_t}
Let $B \in \R^{d\times d}$ be symmetric with a unique largest eigenvalue, and let $v_1$ be a unit eigenvector corresponding to this eigenvalue.
Then the matrix $\Sigma(t)$ defined in \eqref{eq:ACG_Sigma_classical_Oja_flow} with $\Sigma_0 \in \SPD_1(d)$ satisfies
\begin{equation*}
    \frac{\Sigma(t)}{\tr{\Sigma(t)}} \rightarrow v_1 v_1^{\top}, \qquad \text{as } t \rightarrow \infty \, .
\end{equation*}
\end{lemma}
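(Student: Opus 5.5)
We diagonalize $B$ and track how each spectral component of $\Sigma(t)$ grows. Since $B$ is symmetric, write $B = \sum_{i=1}^d \lambda_i v_i v_i^{\top}$ with an orthonormal eigenbasis $v_1,\dots,v_d$ and $\lambda_1 > \lambda_2 \geq \dots \geq \lambda_d$, where $v_1$ is the given unit eigenvector of the unique largest eigenvalue. Then $e^{tB} = \sum_i e^{t\lambda_i} v_i v_i^{\top}$ and $e^{tB^{\top}} = e^{tB}$. Substituting into \eqref{eq:ACG_Sigma_classical_Oja_flow} gives the expansion
\begin{equation*}
\Sigma(t) = \sum_{i,j=1}^d e^{t(\lambda_i + \lambda_j)} \, (v_i^{\top} \Sigma_0 v_j) \, v_i v_j^{\top} \, .
\end{equation*}

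Next we normalize by the dominant rate $e^{2t\lambda_1}$. Since $\Sigma_0 \succ 0$, we have $s_{11} := v_1^{\top}\Sigma_0 v_1 > 0$. Every pair $(i,j) \neq (1,1)$ satisfies $\lambda_i + \lambda_j < 2\lambda_1$, because at least one index differs from $1$ and the top eigenvalue is strict. Consequently
\begin{equation*}
e^{-2t\lambda_1}\Sigma(t) = s_{11} v_1 v_1^{\top} + \sum_{(i,j)\neq(1,1)} e^{-t(2\lambda_1 - \lambda_i - \lambda_j)} (v_i^{\top}\Sigma_0 v_j) v_i v_j^{\top} \longrightarrow s_{11} v_1 v_1^{\top} \, .
\end{equation*}
This is a finite sum of exponentially decaying terms, so the convergence holds in any matrix norm.

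Finally, we pass to the trace-normalized matrix. The trace is continuous, so $e^{-2t\lambda_1}\tr{\Sigma(t)} \to s_{11}\tr{v_1v_1^{\top}} = s_{11} > 0$. Writing
\begin{equation*}
\frac{\Sigma(t)}{\tr{\Sigma(t)}} = \frac{e^{-2t\lambda_1}\Sigma(t)}{e^{-2t\lambda_1}\tr{\Sigma(t)}} \, ,
\end{equation*}
the quotient converges to $s_{11}v_1v_1^{\top}/s_{11} = v_1v_1^{\top}$, because the denominator has a nonzero limit.

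There is no real obstacle here. The only points needing care are the strict inequality $\lambda_i + \lambda_j < 2\lambda_1$ for $(i,j) \neq (1,1)$, which comes from uniqueness of the top eigenvalue, and the positivity $s_{11} > 0$, which comes from positive definiteness of $\Sigma_0$. The condition $\det \Sigma_0 = 1$ is never used.
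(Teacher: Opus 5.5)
Your proof is correct and follows essentially the same route as the paper: you expand $\Sigma(t) = e^{tB}\Sigma_0 e^{tB}$ in the orthonormal eigenbasis of $B$, divide both $\Sigma(t)$ and its trace by $e^{2\lambda_1 t}$, and pass to the limit using $\lambda_1 > \lambda_2$ and $v_1^{\top}\Sigma_0 v_1 > 0$. The only cosmetic difference is that you get the limit of the denominator from continuity of the trace, whereas the paper writes out $\tr{\Sigma(t)} = \sum_i e^{2\lambda_i t}\, v_i^{\top}\Sigma_0 v_i$ explicitly.
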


\begin{proof}[Proof of Lemma \ref{lem:asm_behavior_Sigma_t}]
Let $\{(\lambda_i, v_i)\}_{i=1}^d$ be the eigenpairs of $B$, ordered so that $\lambda_1 > \lambda_2 \geq \dots \geq \lambda_d$, and let $\{v_i\}_{i=1}^d$ be an orthonormal basis of eigenvectors.
Since $B$ is symmetric,
\begin{equation*}
    e^{tB} = \sum_{i=1}^d e^{\lambda_i t} v_i v_i^{\top} \, .
\end{equation*}
Hence,
\begin{equation*}
\Sigma(t) = e^{tB} \Sigma_0 e^{tB} = \sum_{i,j=1}^d e^{(\lambda_i + \lambda_j)t} (v_i^{\top} \Sigma_0 v_j) v_i v_j^{\top} \, .
\end{equation*}
Taking the trace and using $\tr{v_i v_j^{\top}} = v_j^{\top} v_i = \delta_{ij}$, we obtain
\begin{equation*}
\tr{\Sigma(t)} =  \sum_{i=1}^d e^{2\lambda_i t} \left(v_i^{\top} \Sigma_0 v_i \right) \, .
\end{equation*}
Therefore,
\begin{equation*}
    \frac{\Sigma(t)}{\tr{\Sigma(t)}} = \frac{\sum_{i,j=1}^d e^{(\lambda_i + \lambda_j - 2\lambda_1)t} (v_i^{\top} \Sigma_0 v_j) v_i v_j^{\top}}{\sum_{i=1}^d e^{2(\lambda_i - \lambda_1) t} \left(v_i^{\top} \Sigma_0 v_i \right)} \, .
\end{equation*}
Since $\lambda_1$ is the unique largest eigenvalue and $\Sigma_0$ is positive definite, it follows that
\begin{equation*}
\frac{\Sigma(t)}{\tr{\Sigma(t)}}   \rightarrow \frac{(v_1^{\top} \Sigma_0 v_1) v_1 v_1^{\top}}{v_1^{\top} \Sigma_0 v_1} =  v_1 v_1^{\top}, \qquad \text{as } t \rightarrow \infty \, .
\end{equation*}
\end{proof}

\begin{lemma}[Stationary distribution within the ACG family in Example 2]\label{lem:stat_dis_example_2}
Let $M(\Sigma)$ denote the second moment of the angular central Gaussian distribution $\ACG(\Sigma)$ with parameter $\Sigma \in \SPD_1(d)$, as given in \eqref{eq:ACG_second_moment}.
Then 
\begin{equation*}
\left\{ \ACG(\Sigma_*): M(\Sigma_*) = \frac{1}{d} I_d, \;\; \Sigma_* \in \SPD_1(d) \right\} = \left\{ \ACG(I_d) \right\} \, .
\end{equation*}
\end{lemma}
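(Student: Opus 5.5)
The plan is to show that $M(\Sigma_*) = \frac1d I_d$ with $\Sigma_* \in \SPD_1(d)$ forces $\Sigma_* = I_d$. The reverse inclusion is immediate: $\ACG(I_d)$ is the uniform distribution, whose second moment is $\frac1d I_d$ by rotational invariance and $\tr{xx^\top}=1$. For the forward direction, the first step is to diagonalize. Write $\Sigma_* = Q D Q^\top$ with $Q$ orthogonal and $D = \diag(\lambda_1,\dots,\lambda_d)$, $\lambda_i>0$, $\prod_i\lambda_i = 1$. From the integral representation \eqref{eq:ACG_second_moment_main} (or directly from $M(Q D Q^\top) = Q M(D) Q^\top$, which follows from equivariance of $\Pi$ under orthogonal maps), we have
\[
M(\Sigma_*) = Q\, \diag(m_1,\dots,m_d)\, Q^\top, \qquad m_i = \frac12\int_0^\infty \frac{\lambda_i}{(1+s\lambda_i)\prod_{k}\sqrt{1+s\lambda_k}}\,ds .
\]
So the condition becomes $m_i = \frac1d$ for every $i$. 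In particular all $m_i$ are equal.

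The key step is strict monotonicity. For fixed $s>0$, the map $\lambda \mapsto \frac{\lambda}{1+s\lambda}$ is strictly increasing, while the common factor $\prod_k (1+s\lambda_k)^{-1/2}$ does not depend on $i$. Hence $\lambda_i > \lambda_j$ implies that the integrand for $m_i$ strictly dominates the one for $m_j$ on $(0,\infty)$. Both integrals are finite: the integrand decays like $s^{-1-d/2}$, and $\sum_i m_i = \tr{M} = 1$ anyway. So $\lambda_i>\lambda_j$ gives $m_i > m_j$.

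It follows that $m_1=\dots=m_d$ forces $\lambda_1=\dots=\lambda_d=:\lambda$. The constraint $\det\Sigma_*=\lambda^d = 1$ with $\lambda>0$ then gives $\lambda=1$, so $\Sigma_* = Q I_d Q^\top = I_d$. This proves the forward inclusion.

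The only point needing care is justifying the diagonal form of $M$ under conjugation. I would take it directly from \eqref{eq:ACG_second_moment_main}: since $(I_d+s\Sigma)^{-1}$ and $\det(I_d+s\Sigma)$ transform by $Q$-conjugation and are invariant, respectively, this is a routine check. I expect no real obstacle; the monotonicity comparison is the heart of the argument.
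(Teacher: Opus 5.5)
Your proposal is correct and follows essentially the same route as the paper. You simultaneously diagonalize $\Sigma_*$ and $M(\Sigma_*)$ via the integral representation \eqref{eq:ACG_second_moment_main}, show that $m_i - m_j$ has the sign of $\lambda_i - \lambda_j$ (your monotonicity of $\lambda \mapsto \lambda/(1+s\lambda)$ is exactly the paper's difference formula), and conclude $\Sigma_* = I_d$ from $\det \Sigma_* = 1$. The only difference is cosmetic: you verify $M(I_d) = \frac1d I_d$ by rotational invariance of the uniform measure, whereas the paper evaluates the integral directly.
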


\begin{proof}
Let $\Sigma_* \in \SPD_1(d)$.
We first establish a useful property of the second moment $M(\Sigma_*)$.
Since $\Sigma_*$ is symmetric positive definite, it admits the eigendecomposition
\[
\Sigma_* = Q D Q^{\top},
\]
where $Q$ is orthogonal and
\[
D = \diag \left( \lambda_1, \dots, \lambda_d \right), \qquad \lambda_i > 0 \, .
\]
Using the integral representation of $M(\Sigma)$ given in \eqref{eq:ACG_second_moment_main}, together with $I_d + s\Sigma_* = Q (I_d + s D) Q^{\top}$ for every $s \in [0,\infty)$, we obtain
\begin{equation*}
Q^{\top} M(\Sigma_*) Q =  \frac{1}{2} \int_0^{\infty} \frac{D (I_d + s D)^{-1}}{\sqrt{\det (I_d + sD)}} ds \, . 
\end{equation*}
Therefore, $M(\Sigma_*)$ and $\Sigma_*$ have the same eigenvectors.
Denote the corresponding eigenvalues of $M(\Sigma_*)$ by
\begin{equation}\label{eq:miStar}
    m_i := \frac{1}{2} \int_0^{\infty} \frac{\lambda_i (1 + s\lambda_i)^{-1}}{\sqrt{\prod_{k=1}^d (1 + s\lambda_k)}} ds, \qquad i = 1, \dots, d \, .
\end{equation}
Then $Q^{\top} M(\Sigma_*) Q = \diag(m_1, \dots, m_d)$.
For any $i,j = 1,\dots,d$, 
\begin{equation*}
    m_i - m_j = \frac{1}{2} \int_0^{\infty} \frac{1}{\sqrt{\prod_{k=1}^d (1 + s\lambda_k)}} \frac{\lambda_i - \lambda_j}{(1 + s\lambda_i) (1 + s\lambda_j)} ds \, .
\end{equation*}
Since $\lambda_i, \lambda_j > 0$, we know that $m_i - m_j$ has the same sign as $\lambda_i - \lambda_j$.
Moreover, $m_i = m_j$ if and only if $\lambda_i = \lambda_j$.

Now suppose that $M(\Sigma_*) = \frac{1}{d} I_d$.
Then
\[
m_i = \frac{1}{d}, \qquad i = 1, \dots, d \, .
\]
Thus $m_i = m_j$ for all $i,j$, and the previous observation implies that 
\[
\lambda_1 = \lambda_2 = \cdots = \lambda_d =: \lambda_* \, .
\]
Since $\Sigma_* \in \SPD_1(d)$, we have
\[
1 = \det(\Sigma_*) = \prod_{i=1}^d \lambda_i = (\lambda_*)^d \, .
\]
Because $\lambda_* > 0$, it follows that $\lambda_* = 1$.
Therefore, $\Lambda_* = I_d$, and hence $\Sigma_* = Q I_d Q^{\top} = I_d$.
This proves that $M(\Sigma_*) = \frac{1}{d} I_d$ implies $\Sigma_* = I_d$.

Conversely, suppose that $\Sigma_* = I_d$.
Then, by the integral representation of $M$ in \eqref{eq:ACG_second_moment_main},
\[
M(I_d) = \left(\frac{1}{2} \int_0^{\infty} (1 + s)^{-\frac{d}{2} - 1} ds\right) I_d = \frac{1}{d} I_d \, .
\]
Therefore, 
\begin{equation*}
\left\{ \ACG(\Sigma_*): M(\Sigma_*) = \frac{1}{d} I_d, \Sigma_* \in \SPD_1(d) \right\} = \left\{ \ACG(I_d) \right\} \, .
\end{equation*}
This completes the proof.
\end{proof}

\begin{lemma}[Angular central Gaussian reduction for Example $3$]\label{lem:ACG_param_dyn_example_3}
Let $\rho(t)$ be the solution of the mean-field equation \eqref{eq:mf_oja_flow} with the cross-product interaction kernel $\CF$ defined in \eqref{eq:CF_three_body}.
If $\rho_0 = \ACG(\Sigma_0)$ for some $\Sigma_0 \in \SPD_1 (3)$, then $\rho(t) = \ACG (\Sigma(t))$ for all $t \geq 0$, where $\Sigma(t)$ solves the ODE \eqref{eq:ACG_param_example_three_body}.
\end{lemma}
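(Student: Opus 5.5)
The plan is to apply Theorem \ref{thm:ACG_invariant} directly. The cross-product kernel \eqref{eq:CF_three_body} is traceless. It satisfies Assumption \ref{asm:well-posedness} because it is a bounded polynomial-type functional of $\varrho\otimes\varrho$ with a Lipschitz integrand. Theorem \ref{thm:ACG_invariant} therefore gives $\rho(t)=\ACG(\Sigma(t))$ with $\dot\Sigma=\CF[p_\Sigma]\Sigma+\Sigma\CF[p_\Sigma]^\top$. What remains is to compute $\CF[p_\Sigma]$ explicitly in terms of $M:=M(\Sigma)$ and then simplify.

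\textbf{Step 1: express $\CF$ through the second moment.} Write $x\times y=[x]_\times y$, where $[x]_\times$ is the skew cross-product matrix. Then $(x\times y)(x\times y)^\top=[x]_\times\, yy^\top [x]_\times^\top$. Integrating first in $y$ gives $[x]_\times M[x]_\times^\top$. Integrating in $x$ then requires the identity, valid for independent $x,y$ with second moments $A,B$,
\begin{equation*}
\E\big[(x\times y)(x\times y)^\top\big]=\big(\tr{A}\tr{B}-\tr{AB}\big)I_3-\tr{A}\,B-\tr{B}\,A+AB+BA .
\end{equation*}
I would verify this identity by expanding in coordinates, or equivalently via the $\epsilon$–$\delta$ contraction identity $\epsilon_{iab}\epsilon_{jcd}$ expressed as a determinant of Kronecker deltas. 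A sanity check is available: taking traces gives $\tr{A}\tr{B}-\tr{AB}$, which matches $\E\|x\times y\|^2=\E[1-(x\cdot y)^2]$ when $A=B=M$.

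\textbf{Step 2: substitute.} Set $A=B=M$ and use $\tr{M}=1$, which holds since $x\in\BBS^2$. This gives
\begin{equation*}
\iint (x\times y)(x\times y)^\top\,dp_\Sigma\,dp_\Sigma=(1-\tr{M^2})I_3-2M+2M^2,
\qquad
\iint\|x\times y\|^2\,dp_\Sigma\,dp_\Sigma=1-\tr{M^2}.
\end{equation*}
Hence
\begin{equation*}
\CF[p_\Sigma]=2\Big(M-M^2-\tfrac13\big(1-\tr{M^2}\big)I_3\Big),
\end{equation*}
which is symmetric.

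\textbf{Step 3: use commutativity.} By the argument in the proof of Lemma \ref{lem:stat_dis_example_2}, which relies on the integral representation \eqref{eq:ACG_second_moment_main}, $M(\Sigma)$ is diagonalized by the same orthogonal $Q$ as $\Sigma$. Thus $M(\Sigma)$ commutes with $\Sigma$, and so $\CF[p_\Sigma]$ commutes with $\Sigma$ as well. Therefore $\CF\Sigma+\Sigma\CF^\top=2\CF[p_\Sigma]\Sigma$, which is exactly the right-hand side of \eqref{eq:ACG_param_example_three_body}.

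The main obstacle is the tensor identity in Step 1. Everything else follows directly from earlier results. If the coordinate expansion becomes messy, I would instead verify the identity in the eigenbasis of $M$, where by Step 3 the matrix $M$ is diagonal. There, $\E[(x\times y)_i(x\times y)_j]$ vanishes for $i\neq j$ by sign symmetry: ACG laws are invariant under coordinate reflections in the eigenbasis. The diagonal entries reduce to $\E[(x_2y_3-x_3y_2)^2]=2m_2m_3$, and cyclic analogues. This agrees with $(1-\sum m_k^2)-2m_1+2m_1^2=2m_2m_3$ when $m_1+m_2+m_3=1$.
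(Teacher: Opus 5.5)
Your proof is correct. At the level of the lemma it follows the paper's route exactly: invoke Theorem~\ref{thm:ACG_invariant}, express $\CF[p_{\Sigma}]$ through $M=M(\Sigma)$, and use that $\CF[p_\Sigma]$ is symmetric and commutes with $\Sigma$.

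The difference lies in how you obtain $K=\E[(X\times Y)(X\times Y)^{\top}]$. The paper (Lemma~\ref{lem:aux_cross_product}) proceeds in four steps:
\begin{enumerate}
\item it rotates to an eigenbasis of $M$ using some $Q$ with $\det Q=1$;
\item it computes $\WK=2\adj(D)$ entrywise, where off-diagonal entries vanish because every term contains an off-diagonal second moment;
\item it returns to the original basis by rotation-equivariance of the cross product;
\item it uses Cayley--Hamilton to rewrite $2\adj(M)$ as $2M^2-2M+(1-\tr{M^2})I_3$.
\end{enumerate}
Your $\epsilon$--$\delta$ contraction instead gives a polarized identity in one basis-free computation. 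It is valid for independent vectors with arbitrary, possibly different, second moments $A,B$. The identity is correct, and with $A=B=M$ it is exactly $2\adj(M)$ in Cayley--Hamilton form, so the two computations agree even before $\tr{M}=1$ is imposed.

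Your route avoids diagonalization, the orientation condition on $Q$, and Cayley--Hamilton. The price is sign bookkeeping in the $3\times 3$ delta determinant. The paper's route yields the more structural statement $K=2\adj(M)$.

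Your fallback is essentially the paper's computation, with two remarks. First, reflection symmetry of the ACG law is more than you need: by bilinearity and independence, only the vanishing of off-diagonal second moments matters. Second, to transfer back from the eigenbasis you need $(Qx)\times(Qy)=\pm Q(x\times y)$, together with conjugation-equivariance of the polynomial in $M$.

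You also verify the hypotheses of Theorem~\ref{thm:ACG_invariant}, namely tracelessness and $W_1$-Lipschitz continuity, which the paper leaves implicit for this kernel.
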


\begin{proof}[Proof of Lemma \ref{lem:ACG_param_dyn_example_3}]
By applying the ACG reduction established in Theorem \ref{thm:ACG_invariant}, if $\rho_0 = \ACG(\Sigma_0)$, then
\[
\rho(t) = \ACG (\Sigma(t))
\]
for all $t \geq 0$, where $\Sigma(t)$ satisfies
\begin{equation*}
    \dot{\Sigma}(t) = \CF [p_{\Sigma(t)}] \Sigma(t) + \Sigma(t) \CF [p_{\Sigma(t)}]^{\top}, \qquad \Sigma(0) = \Sigma_0 \, .
\end{equation*}
Let $M(\Sigma(t))$ denote the second moment matrix of $p_{\Sigma(t)}$ as in \eqref{eq:ACG_second_moment}.
Applying Lemma \ref{lem:aux_cross_product}, we obtain
\begin{equation*}
\begin{aligned}
\CF [p_{\Sigma(t)}] &= -\int_{\BBS^2} \int_{\BBS^2} \left( (x \times y) (x \times y)^{\top} - \frac{1}{3} \|x \times y\|^2 I_3 \right) p_{\Sigma(t)}(x) p_{\Sigma(t)}(y) dx dy\\
&= - \left( 2 M(\Sigma(t))^2 - 2M(\Sigma(t)) + \left( 1 - \tr{M(\Sigma(t))^2} \right) I_3 \right) + \frac{1}{3} (1 - \tr{M(\Sigma(t))^2}) I_3\\
&= 2 \left(M(\Sigma(t)) - M(\Sigma(t))^2 - \frac{1}{3} \left( 1 - \tr{M(\Sigma(t))^2} \right) I_3 \right) \, .
\end{aligned}
\end{equation*}
Since $\CF[p_{\Sigma(t)}]$ is symmetric, substituting the above expression into the evolution equation for $\Sigma(t)$ and using the fact that $M(\Sigma(t))$ commutes with $\Sigma(t)$ gives \eqref{eq:ACG_param_example_three_body}, completing the proof.
\end{proof}

\begin{lemma}\label{lem:aux_cross_product}
Let $\varrho \in \CP(\BBS^2)$, and let $X, Y \overset{\text{i.i.d.}}{\sim} \varrho$.
Define
\begin{equation*}
    M_{\varrho} := \E [X X^{\top}], \qquad K_{\varrho} := \E \left[ (X \times Y) (X \times Y)^{\top} \right] \, .
\end{equation*}
Then,
\begin{equation*}
    K_{\varrho} = 2\adj(M_{\varrho}) = 2 M_{\varrho}^2 - 2 M_{\varrho} + \left( 1 - \tr{M_{\varrho}^2} \right) I_3,
\end{equation*}
and
\begin{equation*}
    \tr{K_{\varrho}} = 1 - \tr{M_{\varrho}^2} \, .
\end{equation*}
\end{lemma}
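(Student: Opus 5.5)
The plan is to compute $K_\varrho$ entrywise through the standard cross-product identity and independence, identify the result with an adjugate, and then use Cayley--Hamilton together with the constraint $\tr{M_\varrho} = 1$ to obtain the polynomial form.

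\textbf{Step 1: cross-product identity.} For vectors $x,y,u\in\R^3$ we have $u^\top(x\times y) = \det[u,x,y]$. Hence, for $u,v\in\R^3$,
\[
u^\top K_\varrho v = \E\big[\det[u,X,Y]\,\det[v,X,Y]\big].
\]
Expanding the product of determinants with the Cauchy--Binet (Gram) identity gives
\[
(x\times y)\cdot(x'\times y') = (x\cdot x')(y\cdot y') - (x\cdot y')(y\cdot x').
\]

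\textbf{Step 2: entrywise computation of $K_\varrho$.} Work in coordinates with $(x\times y)_i = \epsilon_{iab}x_a y_b$. Then
\[
(K_\varrho)_{ij} = \epsilon_{iab}\epsilon_{jcd}\,\E[X_aX_c]\,\E[Y_bY_d] = \epsilon_{iab}\epsilon_{jcd}M_{ac}M_{bd},
\]
where independence of $X$ and $Y$ was used. The cofactor formula for a symmetric $3\times 3$ matrix reads $\epsilon_{iab}\epsilon_{jcd}M_{ac}M_{bd} = 2\,\mathrm{cof}(M)_{ij}$, and by symmetry of $M$ this equals $2\,\adj(M)_{ij}$. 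This gives $K_\varrho = 2\adj(M_\varrho)$.

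\textbf{Step 3: polynomial form via Cayley--Hamilton.} For a $3\times3$ matrix,
\[
\adj(M) = M^2 - (\tr M)\,M + e_2(M)\,I_3, \qquad e_2(M) = \tfrac12\big((\tr M)^2 - \tr{M^2}\big).
\]
Since $X\in\BBS^2$, we have $\tr{M_\varrho} = \E\|X\|^2 = 1$, so $e_2 = \tfrac12(1-\tr{M^2})$. Substituting yields
\[
2\adj(M_\varrho) = 2M_\varrho^2 - 2M_\varrho + \big(1-\tr{M_\varrho^2}\big)I_3.
\]

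\textbf{Step 4: the trace.} The trace identity can be obtained in two ways. Directly, the Lagrange identity gives $\|x\times y\|^2 = 1 - (x\cdot y)^2$ on the sphere, and $\E(X\cdot Y)^2 = \tr{M^2}$, so $\tr{K_\varrho} = 1 - \tr{M_\varrho^2}$. Alternatively, take the trace of the Step 3 formula: $2\tr{M^2} - 2 + 3(1-\tr{M^2}) = 1-\tr{M^2}$. The two computations agree, which provides a consistency check.

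\textbf{Main obstacle.} The step requiring the most care is the factor $2$ and the transpose/symmetry bookkeeping in Step 2, where the $\epsilon$--$\epsilon$ contraction must be matched to the cofactor formula. I would check it first on a diagonal $M = \diag(m_1,m_2,m_3)$. In that case $(K)_{11} = \E[(X_2Y_3 - X_3Y_2)^2] = 2m_2m_3$, which indeed equals $2\,\adj(M)_{11}$.
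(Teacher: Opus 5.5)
Your proof is correct, and it reaches $K_\varrho = 2\adj(M_\varrho)$ by a different route from the paper's.

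\textbf{The paper's route.} It diagonalizes $M_\varrho = QDQ^{\top}$ with $Q$ orthogonal and $\det Q = 1$, and works with the rotated vectors $\widetilde X = Q^{\top}X$, $\widetilde Y = Q^{\top}Y$. In the eigenbasis it computes the entries explicitly: the diagonal entries are $2\lambda_j\lambda_k$, and the off-diagonal ones vanish because each term carries an off-diagonal second moment. It then uses rotation equivariance of the cross product, which is where $\det Q = 1$ is needed, to get $K_\varrho = 2Q\adj(D)Q^{\top}$. A separate Cayley--Hamilton computation then shows $Q\adj(D)Q^{\top} = \adj(QDQ^{\top})$.

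\textbf{Your route.} The contraction $(K_\varrho)_{ij} = \epsilon_{iab}\epsilon_{jcd}(M_\varrho)_{ac}(M_\varrho)_{bd}$ treats all nine entries at once. It needs neither the spectral theorem, nor the orientation of $Q$, nor conjugation-equivariance of $\adj$. It also extends verbatim to independent $X,Y$ with different laws, giving the polarized cofactor of the two second-moment matrices. Symmetry of $M_\varrho$ enters only to identify the cofactor matrix with its transpose $\adj(M_\varrho)$.

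\textbf{One point to shore up.} Everything rests on the identity $\tfrac12\epsilon_{iab}\epsilon_{jcd}A_{ac}A_{bd} = \mathrm{cof}(A)_{ij}$. Your diagonal sanity check only confirms the diagonal entries of a diagonal matrix, so either cite the identity or verify it directly. For fixed $i,j$, only the two orderings of the rows $\{a,b\} = \{1,2,3\}\setminus\{i\}$ and of the columns $\{c,d\} = \{1,2,3\}\setminus\{j\}$ contribute. The four surviving terms sum to $2(-1)^{i+j}$ times the complementary $2\times 2$ minor, which is twice the $(i,j)$ cofactor.

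\textbf{Shared steps and an extra check.} The Cayley--Hamilton step and the use of $\tr{M_\varrho} = 1$ are the same in both proofs. Your Step 1 Gram identity is never actually used. Your Lagrange-identity computation $\tr{K_\varrho} = 1 - \E[(X^{\top}Y)^2] = 1 - \tr{M_\varrho^2}$ is an independent check that the paper lacks, since it only traces the polynomial formula.
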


\begin{proof}
Fix $\varrho \in \CP(\BBS^2)$.
For notational simplicity, we suppress the dependence on $\varrho$ and write $M := M_{\varrho}$ and $K:= K_{\varrho}$.

Since $M$ is real and symmetric, there exists an orthogonal matrix $Q \in \R^{3 \times 3}$ with $\det Q = 1$ such that
\begin{equation*}
    M = Q D Q^{\top}, \qquad D:= \diag(\lambda_1, \lambda_2, \lambda_3),
\end{equation*}
where $\lambda_1,\lambda_2, \lambda_3$ are the eigenvalues of $M$.

Define the rotated random vectors
\begin{equation*}
\WX = Q^{\top} X, \qquad \WY = Q^{\top} Y \, .
\end{equation*}
Since $X$ and $Y$ are independent and identically distributed, so are $\WX$ and $\WY$.
Moreover,
\begin{equation}\label{eq:WX_WY_second_moment}
    \E \left[ \WX \WX^{\top} \right] = \E \left[ \WY \WY^{\top} \right] = \E \left[ Q^{\top} X X^{\top} Q \right] = Q^{\top} M Q = D \, .
\end{equation}
Set 
\begin{equation*}
    \WK := \E \left[ (\WX \times \WY) (\WX \times \WY)^{\top} \right] \, .
\end{equation*}
We first compute $\WK \in \R^{3 \times 3}$.
With a slight abuse of notation, we write
\begin{equation*}
    \WX = \left( X_1, X_2, X_3 \right)^{\top}, \qquad \WY = (Y_1, Y_2, Y_3)^{\top} \, .
\end{equation*}
By \eqref{eq:WX_WY_second_moment}, we have
\begin{equation*}
\E [X_i X_j] = \E [Y_i Y_j] = \delta_{ij} \lambda_i, \qquad i, j = 1,2,3 \, .
\end{equation*}

Recall that
\begin{equation*}
 \WX \times \WY = \left( \begin{array}{c}
    X_2 Y_3 - X_3 Y_2   \\
    X_3 Y_1 - X_1 Y_3 \\
    X_1 Y_2 - X_2 Y_1
 \end{array} \right) \, .
\end{equation*}
For the first diagonal entry of $\WK$, independence of $\WX$ and $\WY$ gives
\begin{equation*}
\WK_{11} = \E \left[ (X_2 Y_3 - X_3 Y_2)^2 \right] = \E [X_2^2] \E [Y_3^2] + \E [X_3^2] \E [Y_2^2] - 2 \E [X_2 X_3] \E [Y_2 Y_3] = 2 \lambda_2 \lambda_3 \, .
\end{equation*}
Similarly,
\[
\WK_{22} = 2\lambda_1 \lambda_3, \qquad \WK_{33} = 2\lambda_1 \lambda_2 \, .
\]
We can further show that the off-diagonal entries vanish.
For example,
\[
\WK_{12} = \E \left[ (X_2 Y_3 - X_3 Y_2) (X_3 Y_1 - X_1 Y_3) \right] \, .
\]
After expanding, each term contains an off-diagonal second moment of either $\WX$ or $\WY$, which vanishes because their second moment matrix is diagonal.
The same argument applies to the remaining off-diagonal entries.
Therefore,
\begin{equation*}
    \WK = 2 \left( \begin{array}{ccc}
        \lambda_2 \lambda_3 & 0 & 0  \\
        0 & \lambda_1 \lambda_3 & 0 \\
        0 & 0 & \lambda_1 \lambda_2
    \end{array} \right) = 2 \adj (D) \, .
\end{equation*}

Since $Q$ is an orthogonal matrix satisfying $\det (Q) = 1$, the cross product is equivariant under $Q$:
\begin{equation*}
    (Q^{\top} X) \times (Q^{\top} Y) = Q^{\top} (X \times Y) \, .
\end{equation*}
Therefore,
\begin{equation*}
\WK = \E \left[ (\WX \times \WY) (\WX \times \WY)^{\top} \right] = Q^{\top} \E \left[ (X \times Y) (X \times Y)^{\top} \right] Q = Q^{\top} K Q \, .
\end{equation*}
It follows that
\begin{equation*}
K = Q \WK Q^{\top} = 2 Q \adj (D)  Q^{\top} \, .
\end{equation*}
For an arbitrary $3 \times 3$ matrix $A$, the Cayley--Hamilton theorem \cite{horn2012matrix} gives
\begin{equation*}
    \adj (A) = A^2 - \tr{A} A + \frac{1}{2} \left( (\tr{A})^2 - \tr{A^2} \right) I_3 \, .
\end{equation*}
Applying this identity to $Q D Q^{\top}$ gives
\begin{equation*}
\begin{aligned}
\adj (Q D Q^{\top})
&= Q D^2 Q^{\top} - Q \left( \tr{D} D \right) Q^{\top} + \frac{1}{2} \left( (\tr{D})^2 - \tr{D^2} \right) I_3\\
&= Q \left( D^2 - \tr{D} D + \frac{1}{2} \left( (\tr{D})^2 - \tr{D^2} \right) I_3 \right) Q^{\top} = Q \adj (D) Q^{\top} \, .
\end{aligned}
\end{equation*}
Since $M = Q D Q^{\top}$, we conclude that
\begin{equation*}
    K = 2 \adj (M) \, .
\end{equation*}
Finally, because $\varrho$ is supported on $\BBS^2$,
\begin{equation*}
    \tr{M} = \E \left[ \tr{X X^{\top}} \right] = \E [\|X\|^2] = 1 \, .
\end{equation*}
Hence,
\[
K = 2M^2 - 2 M + \left( 1 - \tr{M^2} \right) I_3 \, .
\]
Taking the trace gives
\begin{equation*}
\tr{K} = 2\tr{M^2} - 2\tr{M} + (1 - \tr{M^2}) \tr{I_3} = 1 - \tr{M^2} \, .
\end{equation*}
This completes the proof.
\end{proof}

\end{document}